\def\titlerunning#1{\gdef\titrun{#1}}
\def\author#1{\gdef\autrun{\def\and{\unskip, }#1}\gdef\@author{#1}}
\def\address#1{{\def\and{\\\hspace*{18pt}}\renewcommand{\thefootnote}{}%
\footnote {#1}}%
\markboth{\autrun}{\titrun}}
\def\email#1{e-mail: #1}
\def\subjclass#1{{\renewcommand{\thefootnote}{}%
\footnote{\emph{Mathematics Subject Classification (2010):} #1}}}
\def\keywords#1{\par\medskip
\noindent\textbf{Keywords.} #1}
\newtheorem{thm}{Theorem}[section]
\newtheorem{lem}[thm]{Lemma}
\newtheorem{prop}[thm]{Proposition}
\newtheorem{prob}[thm]{Problem}
\theoremstyle{definition}
\newtheorem{rem}[thm]{Remark}
\newtheorem{con}[thm]{Conjecture}
\numberwithin{equation}{section}
\def\R{\mathbb{R}}
\def\N{\mathbb{N}}
\def\Z{\mathbb{Z}}
\def\Q{\mathbb{Q}}
\begin{document}


\baselineskip=17pt


\titlerunning{Extended Lagrange's four-square theorem}

\title{Extended Lagrange's four-square theorem}

\author{Jesús Lacalle
\and
Laura N. Gatti}

\date{}

\maketitle

\address{J. Lacalle: Dep. de Matemática Aplicada a las Tecnologías de la Información y las Comunicaciones, ETSI de Sistemas Informáticos, Universidad Politécnica de Madrid, C/ Alan Turing s/n, 28031, Madrid, Spain; \email{jlacalle@etsisi.upm.es}
\and
L.N. Gatti: Dep. de Matemática Aplicada a las Tecnologías de la Información y las Comunicaciones, ETSI de Sistemas Informáticos, Universidad Politécnica de Madrid, C/ Alan Turing s/n, 28031, Madrid, Spain; \email{ln.gatti@alumnos.upm.es}}

\subjclass{Primary 11D09; Secondary 11H06}


\begin{abstract}

Lagrange's four-square theorem states that every natural number $n$ can be represented as the sum of four integer squares: $n=x_1^2+x_2^2+x_3^2+x_4^2$. Ramanujan generalized Lagrange's result by providing, up to equivalence, all $54$ quadratic forms $ax_1^2+bx_2^2+cx_3^2+dx_4^2$ that represent all positive integers. In this article, we prove the following extension of Lagrange's theorem: given a prime number $p$ and $v_1\in\Z^4$, $\dots$, $v_k\in\Z^4$, $1\leq k\leq 3$, such that $\|v_i\|^2=p$ for all $1\leq i\leq k$ and $\langle v_i|v_j\rangle=0$ for all $1\leq i<j\leq k$, then there exists $v=(x_1,x_2,x_3,x_4)\in\Z^4$ such that $\langle v_i|v\rangle=0$ for all $1\leq i\leq k$ and
$$
\|v\|^2=x_1^2+x_2^2+x_3^2+x_4^2=p
$$
This means that, in $\Z^4$, any system of orthogonal vectors of norm $p$ can be completed to a base. We conjecture that the result holds for every norm $p\geq 1$. The problem comes up from the study of a discrete quantum computing model in which the qubits have Gaussian integers as coordinates, except for a normalization factor $\sqrt{2^{-k}}$.

\keywords{Lagrange's four-square theorem, $p-$orthonormal base extension theorem, systems of $p-$orthonormal vectors, orthogonal lattices}
\end{abstract}

\section{Introduction}

Long before Lagrange proved his theorem, Diophantus had asked whether every positive integer could be represented as the sum of four perfect squares greater than or equal to zero. This question later became known as Bachet's conjecture, after the 1621 translation of Diophantus by Bachet. In parallel, Fermat proposed the problem of representing every positive integer as a sum of at most $n$ $n-$gonal numbers. Lagrange~\cite{Lag} proved the square case of the Fermat polygonal number theorem in 1770, also solving Bachet's conjecture. Gauss~\cite{Gau} proved the triangular case in 1796 and the full polygonal number theorem was not solved until it was finally proven by Cauchy in 1813. Later, in 1834, Jacobi discovered a simple formula for the number of representations of an integer as the sum of four integer squares.

The same year in which Lagrange proved his theorem, Waring asked whether each natural number $k$ has an associated positive integer $s$ such that every natural number is the sum of at most $s$ natural numbers to the power of $k$. For example, every natural number is the sum of at most $4$ squares, $9$ cubes, or $19$ fourth powers. The affirmative answer to the Waring's problem, known as the Hilbert–Waring theorem, was provided by Hilbert in 1909.

A possible generalization of Lagrange's problem is the following: given natural numbers $a$, $b$, $c$ and $d$, can we solve
$$
n=ax_1^2+bx_2^2+cx_3^2+dx_4^2
$$
for all positive integers $n$ in integers $x_1$, $x_2$, $x_3$ and $x_4$? Lagrange's four-square theorem answered in the positive the case $a=b=c=d=1$ and the general solution was given by Ramanujan~\cite{Ram}. He proved that if we assume, without loss of generality, that $a\leq b\leq c\leq d$ then there are exactly 54 possible choices for $a$, $b$, $c$ and $d$ such that the problem is solvable in integers $x_1$, $x_2$, $x_3$ and $x_4$ for all $n\in\N$.

Another possible generalization, due to Mordel~\cite{Mor}, tries to represent positive definite integral binary quadratic forms instead of positive integers. He proved that the quadratic form $x^2+y^2+z^2+u^2+v^2$ represents all positive definite integral binary quadratic forms.

Sun~\cite{Sun} has proposed some refinements of the Lagrange's theorem such as, for example, the following:
$n\in\N$ can be written as $x^2+y^2+z^2+w^2$ with $x,y,z,w\in\Z$ such that $x+y+z$ (or $x+2y$, or $x+y+2z$) is a square (or a cube).

The extension of the Lagrange's four-square theorem proposed in this article comes up from the study of the model of discrete quantum computation introduced by the authors~\cite{GL1}. In this model, the discrete quantum states (qubits) have Gaussian integers as coordinates, except for a normalization factor $\sqrt{2^{-k}}$. The model is constructed from two elementary quantum gates, $H$ and $G$. The Hadamard gate $H$ is one of the most relevant quantum gates that allows superposition, and therefore entanglement and parallelism.

The other gate, $G$, is a three qubit gate in which the first two are control qubits, while the third is the target. If the control qubits are in state $|1\rangle$ then the gate $V$ is applied to the third qubit.
$$
H=
\frac{1}{\sqrt{2}}
\left(\begin{array}{cc}
1 & 1 \\
1 & -1
\end{array}\right)
\quad\quad\quad
V=
\left(\begin{array}{cc}
1 & 0 \\
0 & i
\end{array}\right)
$$

These quantum gates allow the construction of all discrete states (states with integer real and imaginary parts, i.e. Gaussian integers, as coordinates). It is because of this fact that the authors call the second gate $G$ (for Gauss).

The model was designed to generate all discrete quantum states from the computational base. For this reason the proof of this fact was relatively simple. The defined discrete quantum gates in the model have discrete quantum states as columns (and as rows). As a matter of fact, the authors did not expect that the elementary quantum gates $H$ and $G$ could generate all discrete quantum gates, because this means simultaneously generating as many discrete quantum states as gate columns. But, surprisingly, this could be done and indicated to the authors that it might be true that an orthonormal system of discrete quantum states can always be completed to a base. In this article we include the simplest version of this problem, which was already presented as a conjecture at a conference by the authors~\cite{GL2}.

The outline of the article is as follows: In section 2 we set up notations and discuss some basic properties. In section 3 we prove the main result. Finally, in section 4 we expose several generalizations and conjectures related to the proposed problem.

\section{Notations and basic properties}

We consider $\Z^4$ as a part of the vector space $\R^4$ provided with the inner product $\langle v|w\rangle=x_1y_1+x_2y_2+x_3y_3+x_4y_4$, where $v=(x_1,x_2,x_3,x_4)$ and $w=(y_1,y_2,y_3,y_4)$ are vectors of $\R^4$, and with the canonical base $\{e_1,\dots,e_4\}$.

Given a set of linearly independent vectors $v_1,\dots,v_k\in\R^4$, they generate the \emph{lattice $\Lambda=\{\,b_1v_1+\cdots+b_kv_k\ |\ b_1,\dots,b_k\in\Z\,\}$}~\cite{Ca} and constitute a \emph{base of $\Lambda$}, $B$. So the \emph{dimension of $\Lambda$} will be $k$. From now on we will only consider bases whose vectors belong to $\Z^4$, i.e. $\Lambda$ will always be an \emph{integral lattice}.

Given a point $v\in\Lambda$, described by its coordinates in $B$, $v=(b_i)_B$, the number $N(v)=\|v\|^2=\langle v|v\rangle$ is called the \emph{norm of $v$} and can be calculated by the expression $N(v)=b^tGb$, where $G$ is the \emph{Gram matrix} of the vectors of $B$. The determinant of $G$, $\text{det}(G)$, is an invariant of $\Lambda$ whose square root is denoted by $\text{det}(\Lambda)$. So $\text{det}(\Lambda)=\sqrt{\text{det}(G)}$ and, geometrically, it is interpreted as the volume of the fundamental parallelepiped of $\Lambda$. The matrix $G$ is symmetric and positive definite and is associated to a quadratic form that collects the main properties of $\Lambda$.

Let us consider the \emph{coordinate matrix $V$}, formed by the vectors of the base $B$ of $\Lambda$ placed by rows. If $V$ is a square matrix, we can compute the determinant of $\Lambda$ from $V$, $\text{det}(\Lambda)=|\text{det}(V)|$, and it holds that $\text{det}^2(V)=\text{det}(G)$.

Given a set of vectors $v_1,\dots,v_k\in\Z^4$ such that $N(v_i)=p$ for all $1\leq i\leq k$ and $\langle v_i|v_j\rangle=0$ for all $1\leq i<j\leq k$, we will say that $S=\{\,v_1,\dots,v_k\,\}$ is a \emph{$p-$orthonormal system} and, if $k=4$, that $S$ is a \emph{$p-$orthonormal base}. The \emph{support of $S$} is $supp(S)=\{\,i\ |\ \exists j\ \text{such that the $i-$coordinate of}\ v_j\neq 0\,\}$.

However, we are not interested in $\Lambda$, but rather in its \emph{orthogonal lattice}
$$
\Lambda^{\bot}=\{\,v\in\Z^4\ |\ \langle v_i|v\rangle=0\ \text{for all}\ 1\leq i\leq k\,\}
$$
The resolution method of systems of linear Diophantine equations~\cite{CC} computes a base of $\Lambda^{\bot}$ with $4-k$ vectors. Then the dimension of $\Lambda^\bot$ will be $k^\bot=4-k$. In order to do this we have to solve the linear system $VX=0$, computing the \emph{Smith normal form}~\cite{Smi} of $V$ and its \emph{invariant factors $\alpha_1,\dots,\alpha_k$}:
$$
L\,VR =
\left(\begin{array}{cccc}
\alpha_1 &        & \\
  & \ddots & \\
  &        & \alpha_k
\end{array}\right) = N
\quad\text{such that}\quad
\begin{array}{l}
L\in GL_k(\Z) \\
R\in GL_4(\Z) \\
0<\alpha_1,\cdots,\alpha_k \\
\alpha_1 | \alpha_2,\,\dots\,\alpha_{k-1} | \alpha_k
\end{array}
$$

\begin{lem}\label{LemaBaseLatticeOrtogonal}
Given a number $p\geq 1$ and a $p-$orthonormal system $S=\{\,v_1,\dots,v_k\,\}$, $1\leq k\leq 3$, with associated lattice $\Lambda$, then the last $4-k$ columns of the matrix $R$, in the Smith normal form of $V$, constitute a base of $\Lambda^\bot$.
\end{lem}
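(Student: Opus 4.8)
The plan is to exploit the defining identity $LVR=N$ of the Smith normal form together with the fact that $L$ and $R$ are invertible over $\Z$. Throughout, I regard $\Lambda^{\bot}$, as in the discussion preceding the lemma, as the set of column vectors $X\in\Z^4$ with $VX=0$, and I write $r_1,\dots,r_4$ for the columns of $R$.

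First I would record that $V$ has full row rank $k$: since $p\geq 1$ each $v_i$ is nonzero, and pairwise orthogonal nonzero vectors of $\R^4$ are linearly independent. Hence the Smith normal form of $V$ genuinely has exactly $k$ strictly positive invariant factors $\alpha_1,\dots,\alpha_k$, and $N$ is the $k\times 4$ matrix carrying $\alpha_1,\dots,\alpha_k$ on its main diagonal and zeros elsewhere; in particular its last $4-k$ columns vanish.

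Next I would check the easy inclusion. For $j>k$ the $j$-th column of $N$ is zero, so $LVr_j=LVRe_j=Ne_j=0$, and since $L\in GL_k(\Z)$ is invertible this yields $Vr_j=0$, i.e. $r_j\in\Lambda^{\bot}$. Moreover $r_{k+1},\dots,r_4$ are $\Z$-linearly independent because they are columns of the matrix $R\in GL_4(\Z)$; thus the $\Z$-span of $r_{k+1},\dots,r_4$ is a dimension $4-k$ sublattice of $\Lambda^{\bot}$. The converse inclusion is the step with actual content: given $X\in\Lambda^{\bot}$, set $W=R^{-1}X$, which lies in $\Z^4$ because $R^{-1}\in GL_4(\Z)$. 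Then $NW=LVR\,R^{-1}X=LVX=0$, and reading off coordinates while using $\alpha_i>0$ forces $W_1=\cdots=W_k=0$, so $W=\sum_{j=k+1}^{4}W_je_j$ with $W_j\in\Z$; applying $R$ gives $X=RW=\sum_{j=k+1}^{4}W_jr_j$. Combining the two inclusions, $\{r_{k+1},\dots,r_4\}$ generates $\Lambda^{\bot}$ and is linearly independent, hence is a base, and $\dim\Lambda^{\bot}=4-k=k^{\bot}$ as claimed.

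I do not expect a real obstacle here: the argument is bookkeeping with integer matrices, forced by the invertibility of $L$ and $R$ over $\Z$ and the positivity of the $\alpha_i$. The only point requiring care is the very first step — confirming that $V$ has rank exactly $k$ (so that the Smith normal form has $k$ nonzero invariant factors and ``the last $4-k$ columns of $R$'' is the correct object), which is why the hypothesis $p\geq 1$, guaranteeing the $v_i$ are nonzero and hence independent, is used at the outset.
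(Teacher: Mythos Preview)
Your proof is correct and follows essentially the same approach as the paper: both reduce the system $VX=0$ to $NY=0$ via the substitution $Y=R^{-1}X$, then read off $y_1=\cdots=y_k=0$ from the positivity of the invariant factors. Your version is simply more explicit, separating the two inclusions and spelling out why $V$ has full row rank $k$, points the paper takes for granted.
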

\begin{proof}
It holds that $\,VX=0\,\Leftrightarrow\,L\,VR\,R^{-1}\,X=L\,0=0$ and, considering $Y=R^{-1}\,X$, we have that $\,VX=0\,\Leftrightarrow\,N\,Y=0\,\Leftrightarrow\,y_1=\cdots=y_k=0$. So, the base that generates the solutions of $VX=0$ is $B^\bot=\{\,R\,e_{k+1},\dots,R\,e_4\,\}$, i.e. the set with the last $4-k$ columns of $R$.
\end{proof}

Throughout the article we will use identities among polynomials in many variables whose demonstration only requires the polynomial expansion of the difference of both members of the equalities. We will call this type of proof \emph{polynomial checking}.

\begin{prop}\label{PropInvariantFactors1}
Given a prime number $p$ and a $p-$orthonormal system $S=\{\,v_1,v_2\,\}$, $v_1=(x_1,\dots,x_4)$ and $v_2=(y_1,\dots,y_4)$, with $|\text{supp}(S)|>2$, then $\text{gcd}(x_1,\dots,x_4)=\text{gcd}(y_1,\dots,y_4)=1$ and the invariant factors of $V$ also verify $\alpha_1=\alpha_2=1$.
\end{prop}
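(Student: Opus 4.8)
The plan is to recover the invariant factors from the determinantal divisors of $V$: writing $D_i(V)$ for the greatest common divisor of all $i\times i$ minors of $V$ (with $D_0(V)=1$), one has $\alpha_1=D_1(V)$ and $\alpha_1\alpha_2=D_2(V)$, so it is enough to prove $D_1(V)=D_2(V)=1$. The assertion about the coordinates is the easy part: if $d=\text{gcd}(x_1,\dots,x_4)$ then $N(v_1)=x_1^2+x_2^2+x_3^2+x_4^2=p$ gives $d^2\mid p$, and since $p$ is prime (hence squarefree) we get $d=1$; the same holds for $v_2$. In particular $D_1(V)=\text{gcd}(x_1,\dots,x_4,y_1,\dots,y_4)=1$, so $\alpha_1=1$, and it remains to show that the six minors $m_{ij}=x_iy_j-x_jy_i$, $1\le i<j\le 4$, are globally coprime.

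For this I would bring in the Gram matrix. Because $S$ is $p$-orthonormal, $G=VV^t=\text{diag}(p,p)$, so $\det(VV^t)=p^2$, while the Cauchy--Binet expansion (a routine polynomial checking for a $2\times 4$ matrix) gives $\det(VV^t)=\sum_{1\le i<j\le 4}m_{ij}^2$. Hence, if $g=D_2(V)$ denotes the gcd of the $m_{ij}$, then $g^2\mid p^2$, so $g\in\{1,p\}$ as $p$ is prime. Assume for contradiction that $g=p$. Writing $m_{ij}=p\,n_{ij}$ with $n_{ij}\in\Z$, the identity $\sum m_{ij}^2=p^2$ becomes $\sum n_{ij}^2=1$; since this is a sum of six non-negative integers, exactly one minor equals $\pm p$ and the other five vanish. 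Permuting the four coordinates --- which preserves $\|v_i\|^2=p$, the orthogonality, and $|\text{supp}(S)|$, and permutes the pairs of indices transitively --- we may assume $m_{12}=\pm p$ and $m_{13}=m_{14}=m_{23}=m_{24}=m_{34}=0$.

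The final step is where the hypothesis $|\text{supp}(S)|>2$ is actually used. From $m_{13}=0$ and $m_{23}=0$ we get $x_1y_3=x_3y_1$ and $x_2y_3=x_3y_2$; eliminating $y_3$ yields $x_3(x_2y_1-x_1y_2)=0$, i.e. $x_3\,m_{12}=0$, hence $x_3=0$. Then $m_{13}=0$ and $m_{23}=0$ say $x_1y_3=0=x_2y_3$, and since $m_{12}\ne 0$ rules out $x_1=x_2=0$, we obtain $y_3=0$. Running the same argument with $m_{14}=0$ and $m_{24}=0$ gives $x_4=y_4=0$. Thus both $v_1$ and $v_2$ are supported on $\{1,2\}$, so $|\text{supp}(S)|\le 2$, contradicting the hypothesis; therefore $g=D_2(V)=1$ and $\alpha_2=1$.

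I expect the only genuinely delicate point to be the coordinate elimination in the last paragraph --- the step showing that a single nonzero $2\times 2$ minor forces $S$ to lie in a two-dimensional coordinate subspace. The rest is bookkeeping with determinantal divisors plus the two identities $VV^t=\text{diag}(p,p)$ and $\det(VV^t)=\sum_{i<j}m_{ij}^2$.
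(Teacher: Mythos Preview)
Your proof is correct and follows essentially the same route as the paper: both use the identity $\sum_{i<j}m_{ij}^2=N(v_1)N(v_2)-\langle v_1|v_2\rangle^2=\det(VV^t)=p^2$ (the paper's ``polynomial checking'' is your Cauchy--Binet), then argue that primality of $p$ forces $\gcd(m_{ij})=1$ via the support hypothesis. The one noteworthy difference is that the paper simply asserts ``there are at least two minors different from $0$ because $|\text{supp}(S)|>2$'', whereas you actually prove the contrapositive by the coordinate elimination in your final paragraph; your version is thus a bit more complete at the only spot where the hypothesis is used.
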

\begin{proof}
Suppose, by contradiction, that $\text{gcd}(x_1,\dots,x_4)=g>1$. Then $N(v_1)=g^2(x_1^{\prime\,2}+\cdots+x_4^{\prime\,2})=p$, where $\displaystyle x_i^\prime=\frac{x_i}{g}$ for all $1\leq i\leq 4$, and this fact contradicts the primality of $p$. So, we have that $\text{gcd}(x_1,\dots,x_4)=1$ and in the same way we conclude that $\text{gcd}(y_1,\dots,y_4)=1$. Applying these results, together with the property of the first invariant factor, we get $\alpha_1=1$.

In order to obtain the value of $\alpha_2$ we will use the following identity, that can be proved by polynomial checking:
$$
N(v_1)N(v_2)-\langle v_1|v_2\rangle^2=
\left|
\begin{array}{cc}
x_1 & x_2 \\
y_1 & y_2
\end{array}
\right|^2+
\left|
\begin{array}{cc}
x_1 & x_3 \\
y_1 & y_3
\end{array}
\right|^2+
\cdots +
\left|
\begin{array}{cc}
x_3 & x_4 \\
y_3 & y_4
\end{array}
\right|^2
$$
By hypothesis, $N(v_1)N(v_2)-\langle v_1|v_2\rangle^2=p^2$. Suppose, again by contradiction, that $g=\text{gcd}(m_{12},\dots,m_{34})>1$, where
$$
m_{ij}=\left|
\begin{array}{cc}
x_i & x_j \\
y_i & y_j
\end{array}
\right|\qquad\text{and}\qquad
m_{ij}^\prime=\frac{m_{ij}}{g}
$$
Then $p^2=g^2(m_{12}^{\prime\,2}+\cdots+m_{34}^{\prime\,2})$ and there are, at least, two minors different from 0 because $|\text{supp}(S)|>2$. These facts contradict the primality of $p$. So, we have that $\text{gcd}(m_{12},\dots,m_{34})=1$ and, since this value matches the second invariant factor, we get $\alpha_2=1$.
\end{proof}

Finally, we introduce the fundamental result of the branch of number theory called the geometry of numbers, proved by Minkowski in 1889.

\begin{thm}[Minkowski~\cite{Ca}]\label{ThmMinkowski}
Let $K$ be a convex set in $\R^n$ which is symmetric with respect to the origin. If the volume of $K$ is greater than $2^n$ times the volume of the fundamental domain (parallelepiped) of a lattice $\Lambda$, then $K$ contains a non-zero lattice point.
\end{thm}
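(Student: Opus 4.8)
The plan is to derive Minkowski's theorem from a pigeonhole‑type statement on the torus $\R^n/\Lambda$ (Blichfeldt's lemma), and then to use the central symmetry and convexity of $K$ to upgrade the conclusion of that statement into a genuine lattice point of $K$.

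First I would prove the auxiliary claim: if $S\subseteq\R^n$ is measurable with $\text{vol}(S)>\text{det}(\Lambda)$, then there exist two distinct points $s_1,s_2\in S$ with $s_1-s_2\in\Lambda\setminus\{0\}$. Fix a fundamental parallelepiped $F$ of $\Lambda$, so that the translates $\{\,F+\lambda\ |\ \lambda\in\Lambda\,\}$ tile $\R^n$ and $\text{vol}(F)=\text{det}(\Lambda)$. Cutting $S$ along this tiling and folding each piece back into $F$ gives
$$
\text{vol}(S)=\sum_{\lambda\in\Lambda}\text{vol}\big(S\cap(F+\lambda)\big)=\sum_{\lambda\in\Lambda}\text{vol}\big((S-\lambda)\cap F\big).
$$
If the folded pieces $(S-\lambda)\cap F$ were pairwise disjoint, the right‑hand side would be at most $\text{vol}(F)=\text{det}(\Lambda)$, contradicting $\text{vol}(S)>\text{det}(\Lambda)$. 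Hence some $z\in F$ belongs to $(S-\lambda_1)\cap(S-\lambda_2)$ with $\lambda_1\neq\lambda_2$, and then $s_1=z+\lambda_1$, $s_2=z+\lambda_2$ are distinct points of $S$ with $s_1-s_2=\lambda_1-\lambda_2\in\Lambda\setminus\{0\}$.

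Next I would apply the claim to $S=\tfrac12 K=\{\,x/2\ |\ x\in K\,\}$. The hypothesis $\text{vol}(K)>2^n\,\text{det}(\Lambda)$ gives $\text{vol}(\tfrac12 K)=2^{-n}\,\text{vol}(K)>\text{det}(\Lambda)$, so there are distinct points $a/2,b/2\in\tfrac12 K$, with $a,b\in K$, such that $\tfrac{a-b}{2}\in\Lambda\setminus\{0\}$. Central symmetry of $K$ gives $-b\in K$, and convexity then gives $\tfrac12 a+\tfrac12(-b)=\tfrac{a-b}{2}\in K$. Thus $\tfrac{a-b}{2}$ is a non‑zero lattice point lying in $K$, which proves the theorem.

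The only delicate point is the measure‑theoretic bookkeeping in the auxiliary claim: one has to justify that decomposing $S$ over the countably many tiles $F+\lambda$ and folding the pieces into $F$ preserves total volume, and that pairwise disjointness of the folded pieces would indeed force $\text{vol}(S)\leq\text{det}(\Lambda)$; the rest is elementary. I note that in the applications to the main theorem the set $K$ will be an explicit box or ellipsoid and $\Lambda$ an explicit integral lattice, so the statement could equally well be invoked as cited from~\cite{Ca}; and because it is phrased with a strict inequality, the usual boundary case $\text{vol}(K)=2^n\,\text{det}(\Lambda)$, which additionally requires $K$ closed together with a compactness argument, does not arise here.
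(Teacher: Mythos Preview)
Your argument is the standard and correct derivation of Minkowski's theorem via Blichfeldt's lemma: the pigeonhole step on the folded pieces $(S-\lambda)\cap F$ is sound, and the passage from $\tfrac12 K$ back to $K$ using symmetry and convexity is exactly what is needed. Note, however, that the paper does not actually prove this theorem at all: it is stated with attribution to Minkowski and a citation to Cassels~\cite{Ca}, and is then used as a black box in the proof of Theorem~\ref{ThmExisteV3}. So there is nothing to compare your approach against; you have supplied a self-contained proof where the paper simply invokes the literature. Your closing remark already anticipates this, and indeed for the application in the paper (an explicit ellipse in $\R^2$ and the lattice $\Z^2$) citing the result is entirely adequate.
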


\section{Extended Lagrange's four-square theorem}

We are dealing with the following problem: given a prime number $p$ and a $p-$orthonormal system $S=\{\,v_1,\dots,v_k\,\}$, $1\leq k\leq 3$, with associated lattice $\Lambda$, prove that there exists $v_{k+1}\in\Lambda^\bot$ with norm $N(v_{k+1})=p$.

\begin{rem}\label{RemThmUnVector}
If the $p-$orthonormal system $S$ has a single vector $v_1=(x_1,x_2,x_3,x_4)$, the solution (valid for all $p\geq 1$) is trivial: $v_2=(x_2,-x_1,x_4,-x_3)$.
\end{rem}

\begin{rem}\label{RemThmSuppDos}
If the $p-$orthonormal system $S$ has two vectors and $|\text{supp}(S)|=2$, the solution (also valid for all $p\geq 1$) is as well trivial. Suppose, without loss of generality, that $\text{supp}(S)=\{1,2\}$ and that $v_1=(x_1,x_2,0,0)$. Then, the required vector is, for example, $v_3=(0,0,x_1,x_2)$.
\end{rem}

\subsection{Three vectors $p-$orthonormal systems}

\medskip
If the $p-$orthonormal system has three vectors, their exterior product allows us to obtain the required vector.

\begin{prop}\label{PropThmTresVectores}
Given a number $p\geq 1$ and a $p-$orthonormal system $S=\{\,v_1,v_2,v_3\,\}$, with associated lattice $\Lambda$, there exists $v_4\in\Lambda^\bot$ such that $N(v_4)=p$.
\end{prop}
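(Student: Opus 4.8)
The plan is to take $v_4$ proportional to the exterior product $w=v_1\wedge v_2\wedge v_3\in\Z^4$, whose four coordinates are, up to sign, the $3\times 3$ minors of the coordinate matrix $V$ of $\{v_1,v_2,v_3\}$. First I would record the two elementary facts about $w$. By Laplace expansion, $\langle v_i|w\rangle$ is the determinant of a $4\times 4$ matrix having $v_i$ as a repeated row, so $\langle v_i|w\rangle=0$ for $i=1,2,3$; thus $w$ spans over $\Q$ the one-dimensional solution space of $VX=0$, and in particular $w\neq 0$. Second, by the degree-three analogue of the identity in Proposition~\ref{PropInvariantFactors1} (again a polynomial checking), $\|w\|^2=\det\big(\langle v_i|v_j\rangle\big)_{1\le i,j\le 3}$, and since the Gram matrix of a $p-$orthonormal system of three vectors is $pI_3$, this gives $\|w\|^2=p^3$.

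At this stage the direction of $w$ is already correct but its norm is $p^3$ rather than $p$; the crux is therefore to show $p\mid w_a$ for every coordinate $a$, for then $v_4:=w/p\in\Z^4$ lies in $\Lambda^\bot$ (being a scalar multiple of $w$) and has $N(v_4)=\|w\|^2/p^2=p$. To get the divisibility I would pass to orthogonal projections in $\R^4$. Writing vectors as rows and $v^{t}v$ for the associated rank-one matrix: because $\{v_1,v_2,v_3\}$ is $p-$orthonormal, the orthogonal projection onto $U=\operatorname{span}(v_1,v_2,v_3)$ is $P=\frac1p(v_1^{t}v_1+v_2^{t}v_2+v_3^{t}v_3)$, while the projection onto the line $U^\bot=\operatorname{span}(w)$ is $\frac1{p^3}w^{t}w$; adding them to $I_4$ and clearing denominators yields
$$
w^{t}w \;=\; p^{2}\Big(\,p\,I_4-\sum_{i=1}^{3}v_i^{t}v_i\,\Big),
$$
a matrix all of whose entries lie in $p^2\Z$. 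Reading off the $(a,a)$ entry gives $w_a^{2}=p^{2}\big(p-\sum_{i=1}^{3}(v_i)_a^{2}\big)\in p^{2}\Z$, and $p^{2}\mid w_a^{2}$ forces $p\mid w_a$ (compare the exponent of each prime on both sides). Hence $v_4=w/p$ works, and incidentally the argument never uses that $p$ is prime, which matches the hypothesis $p\ge 1$.

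The routine ingredients are the Gram-determinant form of $\|w\|^2$ and the standard formula for the projection onto an orthogonal (not necessarily unit) set of vectors; the one step that carries the argument is the displayed matrix identity, since it is precisely what converts the crude information $\|w\|^2=p^3$ into the coordinatewise divisibility $p\mid w$ — without it one only obtains a vector of norm $p^3$. I do not expect a serious obstacle beyond spotting this: the remaining work is bookkeeping (signs in the minor expansion, the row/column convention for $v^{t}v$). If one prefers to avoid projections in $\R^4$, then for prime $p$ one can instead reduce $V$ modulo $p$, observe that its rows span a totally isotropic subspace of $(\mathbb{F}_p^4,\,x_1^2+\cdots+x_4^2)$, whose dimension is at most the Witt index $2$, and conclude that $\operatorname{rank}\bar V\le 2$, i.e. every $3\times 3$ minor of $V$ is divisible by $p$.
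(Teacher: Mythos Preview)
Your argument is correct and follows the same overall strategy as the paper: form the exterior product $w=v_1\wedge v_2\wedge v_3$, establish the coordinatewise identity $w_a^{2}=p^{2}\bigl(p-\sum_i(v_i)_a^{2}\bigr)$, and conclude that $v_4=w/p\in\Z^4$ has norm $p$. The paper obtains that identity by direct polynomial checking of the Gram-determinant expression for each $t_a^2$, whereas you derive it more conceptually from the projection decomposition $I_4=\tfrac{1}{p}\sum_i v_i^{t}v_i+\tfrac{1}{p^3}w^{t}w$, which yields the full matrix equality $w^{t}w=p^{2}\bigl(pI_4-\sum_i v_i^{t}v_i\bigr)$ in one stroke; this is a tidier route to the same endpoint. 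Your closing remark via the Witt index over $\mathbb{F}_p$ is a genuinely different alternative, but it only covers prime $p$, so the projection argument is the one that matches the stated hypothesis $p\ge 1$.
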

\begin{proof}
Given the coordinates of the three vectors of $S$, $v_1=(x_1,x_2,x_3,x_4)$, $v_2=(y_1,y_2,$ $y_3,y_4)$ and $v_3=(z_1,z_2,z_3,z_4)$, we consider the exterior product $t=(t_1,t_2,t_3,t_4)$ where
$$
t_1=-\left|
\begin{array}{ccc}
x_2 & x_3 & x_4 \\
y_2 & y_3 & y_4 \\
z_2 & z_3 & z_4
\end{array}
\right|\quad\cdots\quad
t_4=\left|
\begin{array}{ccc}
x_1 & x_2 & x_3 \\
y_1 & y_2 & y_3 \\
z_1 & z_2 & z_3
\end{array}
\right|
$$
It can be proved that $t\in\Lambda^\bot$, by polynomial checking of $\langle v_i|t\rangle=0$, $1\leq i\leq 3$, and that $t_i^2=p^2(p-x_i^2-y_i^2-z_i^2)$, $1\leq i\leq 4$. In order to check the last equality, for example for $i=4$, it is enough to verify, by polynomial checking, that
$$
t_4^2=N(x)\,N(y)\,N(z)+2\langle x|y\rangle\langle x|z\rangle\langle y|z\rangle-N(x)\langle y|z\rangle^2-N(y)\langle x|z\rangle^2-N(z)\langle x|y\rangle^2,
$$
where $x=(x_1,x_2,x_3)$, $y=(y_1,y_2,y_3)$ and $z=(z_1,z_2,z_3)$, to replace the following values
$$
\begin{array}{ll}
N(x)=p-x_4^2 & \qquad\langle x|y\rangle=-x_4y_4 \\
N(y)=p-y_4^2 & \qquad\langle x|z\rangle=-x_4z_4 \\
N(z)=p-z_4^2 & \qquad\langle y|z\rangle=-y_4z_4
\end{array}
$$
and to test the expression obtained by replacing $t_4^2$ with $p^2(p-x_4^2-y_4^2-z_4^2)$ by polynomial checking. Finally, $\displaystyle v_4=\frac{t}{p}$ has the required properties: $v_4\in\Lambda^\bot$ and $N(v_4)=p$.
\end{proof}

\subsection{A two vectors $p-$orthonormal system $S$ with $|\text{supp}(S)|>2$}

\medskip
First of all, let us get a base of $\Lambda^\bot$, $B^\bot$, by computing a Smith quasi-normal form in which $L\in GL_k(\Q)$. Note that in this case lemma~\ref{LemaBaseLatticeOrtogonal} also holds. Let $V$ be the coordinate matrix of the $p-$orthonormal system $S=\{\,v_1,v_2\,\}$ with $|\text{supp}(S)|>2$, $v_1=(x_1,x_2,x_3,x_4)$, $v_2=(y_1,y_2,y_3,y_4)$ and $p\geq 1$. Suppose, rearranging the coordinates of $v_1$ and $v_2$ if necessary, that
$$
x_1\neq 0,\quad
\left|
\begin{array}{cc}
x_1 & x_2 \\
y_1 & y_2
\end{array}
\right| \neq 0\quad \text{and} \quad
4\in\text{supp}(S),\ \text{i.e.}\ x_4\neq 0\ \text{or}\ y_4\neq 0
$$
The Smith quasi-normal form of $S$ is:
$$
L\,VR =
\left(\begin{array}{cccc}
c & 0  & 0 & 0 \\
0 & cd & 0 & 0
\end{array}\right)
\quad\text{such that}\quad
\begin{array}{l}
L\in GL_k(\Q) \\
R\in GL_4(\Z) \\
0<c,d \\
R=R_1\, R_2\, R_3\, R_4\, R_5 \\
\end{array}
$$
where the matrices $L$ and $R_i$, $1\leq i\leq 5$, and the parameters $c$ and $d$ are those that appear in table~\ref{t:SQNF}.

\begin{table}[ht]
\centering
\begin{tabular}{|l|}
\hline
\footnotesize $\displaystyle R_1=\left(\begin{array}{cccc} \sigma_1 & \frac{-x_2}{c_1} & 0 & 0 \\ \tau_1 & \frac{x_1}{c_1} & 0 & 0 \\ 0 & 0 & 1 & 0 \\ 0 & 0 & 0 & 1 \end{array}\right)\vline height 1.1cm depth 0.9cm width 0.0cm
\begin{array}{l}
x_1\sigma_1+x_2\tau_1=c_1=\text{gcd}(x_1,x_2) \\
y_1^{\prime}=\sigma_1y_1+\tau_1y_2 \\
\displaystyle y_2^{\prime}=\frac{-x_2}{c_1}y_1+\frac{x_1}{c_1}y_2
\end{array}$ \\
\hline
\footnotesize $\displaystyle R_2=\left(\begin{array}{cccc} \sigma_2 & 0 & \frac{-x_3}{c_2} & 0 \\ 0 & 1 & 0 & 0 \\ \tau_2 & 0 & \frac{c_1}{c_2} & 0 \\ 0 & 0 & 0 & 1 \end{array}\right)\vline height 1.1cm depth 0.9cm width 0.0cm
\begin{array}{l}
c_1\sigma_2+x_3\tau_2=c_2=\text{gcd}(c_1,x_3) \\
y_1^{\prime\prime}=\sigma_2y_1^{\prime}+\tau_2y_3=\sigma_2\sigma_1y_1+\sigma_2\tau_1y_2+\tau_2y_3 \\
\displaystyle y_3^{\prime}=\frac{-x_3}{c_2}y_1^{\prime}+\frac{c_1}{c_2}y_3=\frac{-x_3}{c_2}\sigma_1y_1+\frac{-x_3}{c_2}\tau_1y_2 + \frac{c_1}{c_2}y_3
\end{array}$ \\
\hline
\footnotesize $\displaystyle R_3=\left(\begin{array}{cccc} \sigma_3 & 0 & 0 & \frac{-x_4}{c} \\ 0 & 1 & 0 & 0 \\ 0 & 0 & 1 & 0 \\ \tau_3 & 0 & 0 & \frac{c_2}{c}\end{array}\right)\vline height 1.1cm depth 0.9cm width 0.0cm
\begin{array}{l}
c_2\sigma_3+x_4\tau_3=c=\text{gcd}(c_2,x_4) \\
y_1^{\prime\prime\prime}=\sigma_3y_1^{\prime\prime}+\tau_3y_4=\sigma_3\sigma_2\sigma_1y_1+\sigma_3\sigma_2\tau_1y_2+\sigma_3\tau_2y_3+\tau_3y_4 \\
\displaystyle y_4^{\prime}=\frac{-x_4}{c}y_1^{\prime\prime}+\frac{c_2}{c}y_4=\frac{-x_4}{c}\sigma_2\sigma_1y_1 + \frac{-x_4}{c}\sigma_2\tau_1y_2+\frac{-x_4}{c}\tau_2y_3+\frac{c_2}{c}y_4
\end{array}$ \\
\hline
\footnotesize $\displaystyle L=\left(\begin{array}{cc} 1 & 0 \\ -y_1^{\prime\prime\prime} & c \end{array}\right)$\vline height 0.7cm depth 0.5cm width 0.0cm \\
\hline
\footnotesize $\displaystyle R_4=\left(\begin{array}{cccc} 1 & 0 & 0 & 0 \\ 0 & \sigma_4 & \frac{-y_3^{\prime}}{d_1} & 0 \\ 0 & \tau_4 & \frac{y_2^{\prime}}{d_1} & 0 \\ 0 & 0 & 0 & 1 \end{array}\right)\vline height 1.2cm depth 1.0cm width 0.0cm
\begin{array}{l}
y_2^{\prime}\sigma_4+y_3^{\prime}\tau_4=d_1=\text{gcd}(y_2^{\prime},y_3^{\prime})
\end{array}$ \\
\hline
\footnotesize $\displaystyle R_5=\left(\begin{array}{cccc} 1 & 0 & 0 & 0 \\ 0 & \sigma_5 & 0 & \frac{-y_4^{\prime}}{d} \\ 0 & 0 & 1 & 0 \\ 0 & \tau_5 & 0 & \frac{d_1}{d} \end{array}\right)\vline height 1.1cm depth 0.9cm width 0.0cm
\begin{array}{l}
d_1\sigma_5+y_4^{\prime}\tau_5=d=\text{gcd}(d_1,y_4^{\prime})
\end{array}$ \\
\hline
\end{tabular}
\caption{Smith quasi-normal form data.}
\label{t:SQNF}
\end{table}

\begin{lem}\label{LemBaseOrtogonal}
Given a number $p\geq 1$ and a $p-$orthonormal system $S=\{\,v_1,v_2\,\}$ with associated lattice $\Lambda$, then $B^\bot=\{\,w_1,w_2\,\}$ is a base of $\Lambda^\bot$, where
$$
\begin{array}{lll}
\vline height 0.7cm depth 0.6cm width 0pt w_1 & = & \displaystyle\left(\frac{x_2\, y_3^{\prime}}{c_1\, d_1}-\frac{x_3\, y_2^{\prime}\,\sigma_1}{c_2\, d_1},
-\frac{x_1\, y_3^{\prime}}{c_1\, d_1}-\frac{x_3\, y_2^{\prime}\,\tau_1}{c_2\, d_1},
\frac{c_1\, y_2^{\prime}}{c_2\, d_1},0\right) \\
\vline height 0.7cm depth 0.6cm width 0pt w_2 & = & \displaystyle\left(\frac{y_4^{\prime}(c_1\, x_3\,\sigma_1\,\tau_4+c_2\, x_2\,\sigma_4)}{c_1\, c_2\, d} - \frac{d_1\, x_4\, \sigma_1\,\sigma_2}{c\, d},\right. \\
\vline height 0.7cm depth 0.6cm width 0pt  & & \displaystyle\quad \left. \frac{y_4^{\prime}(c_1\, x_3\,\tau_1\,\tau_4-c_2\, x_1\,\sigma_4)}{c_1\, c_2\, d} - \frac{d_1\, x_4\, \sigma_2\,\tau_1}{c\, d}, -\frac{d_1\, x_4\,\tau_2}{c\, d}-\frac{c_1\, y_4^{\prime}\,\tau_4}{c_2\, d},
 \frac{c_2\, d_1}{c\, d}\right) \\
\end{array}
$$
\end{lem}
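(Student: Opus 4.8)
The plan is to recognize $w_1$ and $w_2$ as the third and fourth columns of the matrix $R=R_1R_2R_3R_4R_5$ and then to apply Lemma~\ref{LemaBaseLatticeOrtogonal} in the quasi-normal setting. Since $v_1,v_2$ have norm $p\geq1$ they are nonzero, and being orthogonal they are linearly independent, so $V$ has rank $2$ and some minor $m_{ij}=x_iy_j-x_jy_i$ is nonzero; then columns $i,j$ of $V$ are both nonzero (hence $i,j\in\text{supp}(S)$) and at least one of $x_i,x_j$ is nonzero. Relabeling that index as $1$, the other as $2$, and a third index of $\text{supp}(S)$ as $4$ (available because $|\text{supp}(S)|>2$), one obtains $x_1\neq0$, $m_{12}\neq0$ and $4\in\text{supp}(S)$ simultaneously. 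Moreover $y_2'=m_{12}/c_1\neq0$, so the gcd's $c_1,c_2,c,d_1,d$ are all well defined and positive and Table~\ref{t:SQNF} makes sense.

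First I would check that the table really yields a Smith quasi-normal form: $R\in GL_4(\Z)$, $L\in GL_2(\Q)$ and $LVR=N$. Integrality of each $R_i$ comes from the divisibilities carried by the gcd's ($c_1\mid x_1,x_2$, $c_2\mid c_1,x_3$, $c\mid c_2,x_4$, $d_1\mid y_2',y_3'$, $d\mid d_1,y_4'$; these first make $y_2'=m_{12}/c_1$, $y_3'$, $y_4'$ integral and then clear all denominators in $R_i$), and $\det R_i=1$ because expanding $\det R_i$ along the two rows containing a lone $1$ leaves a $2\times2$ block of determinant $(x_1\sigma_1+x_2\tau_1)/c_1$, $(c_1\sigma_2+x_3\tau_2)/c_2$, $(c_2\sigma_3+x_4\tau_3)/c$, $(y_2'\sigma_4+y_3'\tau_4)/d_1$, $(d_1\sigma_5+y_4'\tau_5)/d$ respectively, each equal to $1$ by the relevant B\'ezout identity; hence $R=R_1\cdots R_5\in GL_4(\Z)$, and $\det L=c>0$ gives $L\in GL_2(\Q)$. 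The identity $LVR=N$ is then polynomial checking done in the obvious order: $VR_1$ has first row $(c_1,0,x_3,x_4)$, $VR_1R_2$ has first row $(c_2,0,0,x_4)$, and $VR_1R_2R_3$ has first row $(c,0,0,0)$ and second row $(y_1''',y_2',y_3',y_4')$; left multiplication by $L$ clears $y_1'''$ leaving second row $(0,cy_2',cy_3',cy_4')$, which $R_4$ turns into $(0,cd_1,0,cy_4')$ by $y_2'\sigma_4+y_3'\tau_4=d_1$ and $R_5$ into $(0,cd,0,0)$ by $d_1\sigma_5+y_4'\tau_5=d$; the result is exactly $N$.

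With $LVR=N$, $R\in GL_4(\Z)$ and $L$ invertible established, the proof of Lemma~\ref{LemaBaseLatticeOrtogonal} goes through unchanged (it uses only these facts together with the observation that the nonzero columns of $N$ are the first two): writing $Y=R^{-1}X$ one has $VX=0\Leftrightarrow NY=0\Leftrightarrow y_1=y_2=0$, so $B^\bot=\{Re_3,Re_4\}$ is a base of $\Lambda^\bot$. It then remains to read off these last two columns of $R=R_1R_2R_3R_4R_5$: computing $R_5e_3$, then $R_4(R_5e_3)$, and so on, and likewise starting from $e_4$, reproduces precisely the coordinate expressions claimed for $w_1$ and $w_2$. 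In particular, because $R\in GL_4(\Z)$, the vectors $w_1,w_2$ have integer coordinates, even though the displayed formulas carry the denominators $c_1,c_2,c,d_1,d$.

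The hard part is clerical rather than conceptual: one must drag the nested quantities $y_1',y_2',y_1'',y_3',y_1''',y_4'$ and the gcd chain $c_1,c_2,c,d_1,d$ consistently through five matrix products, checking at each stage that the partially built matrix stays integral in spite of the denominators in the individual factors. No single step is difficult, but this bookkeeping is essentially the whole content of the lemma.
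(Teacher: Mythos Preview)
Your proof is correct and follows exactly the same route as the paper: compute $R=R_1R_2R_3R_4R_5$, extract its last two columns, and invoke Lemma~\ref{LemaBaseLatticeOrtogonal} in the quasi-normal setting. The paper's proof is a one-line pointer to this computation, whereas you have written out the verification that $LVR=N$, the integrality and unimodularity of the $R_i$, and the applicability of Lemma~\ref{LemaBaseLatticeOrtogonal} in detail---but the approach is the same.
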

\begin{proof}
We obtain the result just by multiplying the matrices $R_1$, $R_2$, $R_3$, $R_4$ and $R_5$ and applying lemma~\ref{LemaBaseLatticeOrtogonal} to the Smith quasi-normal form of $S$.
\end{proof}

\begin{rem}\label{RemDetLattice}
Let $V$ and $G_V$ be the coordinate matrix and the Gram matrix, respectively, of the set of vectors $B\cup B^\bot$ and let $G$ be the Gram matrix of the set of vectors $B^\bot$. Then, $\text{det}^2(V)=\text{det}(G_V)=p^2\text{det}(G)$ and, since $\text{det}^2(\Lambda^\bot)=\text{det}(G)$, we concluded that $\displaystyle\text{det}(\Lambda^\bot)=\frac{\left|\text{det}(V)\right|}{p}$.
\end{rem}

We can use remark~\ref{RemDetLattice} to compute $\text{det}(\Lambda^\bot)$ and, indirectly, to study the matrix $G$, considered as a symmetric positive definite quadratic form.

\begin{prop}\label{PropDetLattice}
Given a number $p\geq 1$ and a $p-$orthonormal system $S=\{\,v_1,v_2\,\}$, with associated lattice $\Lambda$, then
 $\displaystyle\text{det}(\Lambda^\bot)=\frac{p}{c\,d}$, where $c$ and $d$ are the parameters that appear in table~\ref{t:SQNF}.
\end{prop}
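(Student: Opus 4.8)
The plan is to combine Remark~\ref{RemDetLattice} with the explicit shape of the Smith quasi-normal form in Table~\ref{t:SQNF}. By Remark~\ref{RemDetLattice} we have $\text{det}(\Lambda^\bot)=|\text{det}(V)|/p$, so it suffices to show that $|\text{det}(V)|=p^2/(cd)$, where $V$ is the $2\times 4$ coordinate matrix of $S$. Of course $V$ is not square, so ``$\text{det}(V)$'' here must be read through the Gram matrix: $\text{det}(V\,V^t)=N(v_1)N(v_2)-\langle v_1|v_2\rangle^2=p^2$ since $S$ is $p$-orthonormal. The quantity that actually governs $\text{det}(\Lambda^\bot)$ is therefore $\sqrt{\text{det}(VV^t)}=p$ divided by the appropriate factor coming from the non-unimodular part of the reduction.

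First I would recall the identity from Proposition~\ref{PropInvariantFactors1}, namely $N(v_1)N(v_2)-\langle v_1|v_2\rangle^2=\sum_{i<j}m_{ij}^2$, which gives $\sum_{i<j}m_{ij}^2=p^2$ for our system. Next I would use Lemma~\ref{LemBaseOrtogonal}: the vectors $w_1,w_2$ form a base of $\Lambda^\bot$, so $\text{det}(\Lambda^\bot)=\sqrt{\text{det}(G)}$ where $G$ is the $2\times 2$ Gram matrix of $\{w_1,w_2\}$, i.e. $\text{det}(\Lambda^\bot)^2=\|w_1\|^2\|w_2\|^2-\langle w_1|w_2\rangle^2$. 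The cleanest route is then to observe that stacking $V$ on top of the coordinate matrix $W$ of $\{w_1,w_2\}$ produces a $4\times 4$ matrix whose determinant is, by block-orthogonality of $B$ and $B^\bot$, equal to $\pm\,\text{det}(\Lambda)\,\text{det}(\Lambda^\bot)=\pm\,p\,\text{det}(\Lambda^\bot)$; on the other hand, since $L\,V\,R = N$ with $L\in GL_2(\Q)$, $R\in GL_4(\Z)$ and $N$ the $2\times4$ diagonal block $\mathrm{diag}(c,cd)$, and since $R=R_1R_2R_3R_4R_5$ is a product of matrices each of determinant $\pm 1$ (one checks from Table~\ref{t:SQNF} that $\det R_1=\det R_2=\det R_3=\det R_4=\det R_5=\pm1$, the off-diagonal $2\times2$ blocks being Bézout matrices of determinant $\pm1$), the reduction contributes exactly the factor $c\cdot cd$ dividing $p^2$. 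Concretely, I would compute $\det\!\begin{psmallmatrix}V\\ W\end{psmallmatrix}$ directly from the explicit entries of $w_1,w_2$ in Lemma~\ref{LemBaseOrtogonal} — or, more economically, argue that $W$ is (up to the unimodular factor $R$) the bottom two rows $R e_3, R e_4$ scaled by the reciprocals $1/(c_1d_1), 1/(cd)$-type denominators appearing in Table~\ref{t:SQNF}, whose product telescopes: $c_1\cdot(c_2/c_1)\cdots$ collapses to $c$ on the first block and $d_1\cdot(d/d_1)$ collapses to $d$ on the second, leaving overall denominator $cd$. Hence $\text{det}(\Lambda^\bot)=|\det W| = p/(cd)$, and combining with Remark~\ref{RemDetLattice} we also recover $|\det V| = p^2/(cd)$ consistently.

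The main obstacle I anticipate is purely bookkeeping: verifying that the product of the five matrices $R_1,\dots,R_5$ has determinant $\pm1$ and that the cumulative denominators introduced in the three ``$c$-steps'' and the two ``$d$-steps'' multiply to exactly $cd$ rather than to some proper divisor or multiple — in other words, tracking that the telescoping of the $\gcd$ chain $c_1\mid c_2\mid c$ (after rescaling) and $d_1\mid d$ is clean and that no spurious common factor is lost when passing from $B$ to $B^\bot$. Once that telescoping is pinned down, everything else is a one-line application of Remark~\ref{RemDetLattice}. A safe alternative, if the telescoping turns out to be delicate, is to substitute the closed-form coordinates of $w_1$ and $w_2$ from Lemma~\ref{LemBaseOrtogonal} into $\|w_1\|^2\|w_2\|^2-\langle w_1|w_2\rangle^2$ and reduce the resulting rational expression by polynomial checking, using $N(v_1)=N(v_2)=p$ and $\langle v_1|v_2\rangle=0$ together with the minor identity above; this is more laborious but entirely mechanical and avoids any subtlety about the index of sublattices.
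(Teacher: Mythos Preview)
Your concrete fallback plan --- plug the explicit coordinates of $w_1,w_2$ from Lemma~\ref{LemBaseOrtogonal} into the $4\times4$ determinant and simplify by polynomial checking, then invoke Remark~\ref{RemDetLattice} --- is exactly what the paper does (its proof is a long but routine monomial bookkeeping that collapses $\det(V)\,cd$ to $N(v_1)N(v_2)-\langle v_1|v_2\rangle^2=p^2$). So as a plan that part is fine and matches the paper.

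The ``more economical'' Smith-form shortcut, however, is not correct as you have written it. Two issues. First, $w_1=Re_3$ and $w_2=Re_4$ are literally the last two columns of $R$; there is no extra scaling by ``$1/(c_1d_1)$, $1/(cd)$-type denominators'' to telescope --- those denominators are already inside the entries of $R$ and do not factor out of $W$ as a whole, so the telescoping you describe does not exist. Second, unimodularity of $R$ alone cannot give you $\det\!\left(\begin{smallmatrix}V\\W\end{smallmatrix}\right)$, because $R$ is not orthogonal: multiplying on the right by $R$ sends the bottom block to rows of $R^tR$, not to $e_3,e_4$. Relatedly, the factor you name, $c\cdot cd=c^2d$, is off by $c$; the correct divisor is $cd$.

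There \emph{is} a clean shortcut in this spirit, but it goes through a different identity: for any rank-$k$ sublattice $\Lambda\subset\Z^n$ one has $\det(\Lambda^\bot)=\det(\Lambda)/(\alpha_1\cdots\alpha_k)$, where the $\alpha_i$ are the invariant factors of $V$. Here $\det(\Lambda)=\sqrt{\det(VV^t)}=p$, and from $LVR=N=\mathrm{diag}(c,cd)$ with $\det L=c$ (read off Table~\ref{t:SQNF}) and $\det R=\pm1$ one gets that the gcd of the $2\times2$ minors of $N$, namely $c^2d$, equals $|\det L|\cdot\alpha_1\alpha_2=c\,\alpha_1\alpha_2$, whence $\alpha_1\alpha_2=cd$ and $\det(\Lambda^\bot)=p/(cd)$. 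The paper does not take this route --- it stays self-contained and grinds out the determinant --- so if you want to avoid the polynomial computation you would need to justify that lattice identity separately.
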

\begin{proof}
To obtain the result we only have to compute $\text{det(V)}$, by remark~\ref{RemDetLattice}. Developing the expression of the determinant of $V$,
where $w_1$ and $w_2$ are the vectors obtained in lemma~\ref{LemBaseOrtogonal}, we obtain:
$$\displaystyle
\begin{array}{ccl}
\text{det}(V) c_1 c_2 d_1 cd & = &
c y_4^{\prime}(c_1(x_1^2 y_4-x_1 x_4 y_1+x_2(x_2 y_4-x_4 y_2)) + \\
& & \quad\ \ \, x_3(\underline{x_1\sigma_1+x_2\tau_1})(x_3 y_4-x_4 y_3))(\underline{y_2^{\prime}\sigma_4+y_3^{\prime}\tau_4}) + \\
& & d_1(c_1^2 y_2^{\prime}(c_2(x_1 y_2-x_2 y_1)+x_1 x_4 y_4\sigma_2\tau_1 - \\
& & \quad\quad\quad\ \, x_4\sigma_2(x_2 y_4\sigma_1+x_4(y_1\tau_1-y_2\sigma_1))) + \\
& & \quad\ c_1 x_3 y_2^{\prime}(c_2(x_1 y_3\tau_1-x_2 y_3\sigma_1 + x_3(y_2\sigma_1-y_1\tau_1)) + \\
& & \quad\quad\quad\quad \ x_4\tau_2(x_1 y_4\tau_1-x_2 y_4\sigma_1+x_4(y_2\sigma_1-y_1\tau_1))) + \\
& & \quad\ c_2 y_3^{\prime}(c_2(x_1^2 y_3-x_1 x_3 y_1+x_2(x_2 y_3-x_3 y_2)) + \\
& & \quad\quad\quad\ \, x_4(x_1^2 y_4\tau_2-x_1(x_3 y_4\sigma_1\sigma_2+x_4(y_1\tau_2-y_3\sigma_1\sigma_2)) + \\
& & \quad\quad\quad\quad\ \ \, x_2(x_2 y_4\tau_2-x_3 y_4\sigma_2\tau_1+x_4(y_3\sigma_2\tau_1-y_2\tau_2)))))
\end{array}
$$
where all the parameters appear in table~\ref{t:SQNF}.

Throughout the proof we will replace expressions by applying equalities from table~\ref{t:SQNF}.

Substituting the underlined expressions by $c_1$ and $d_1$ respectively, all occurrences of $d_1$ are canceled. Similarly, substituting $c_1y_2^{\prime}$, $c_2y_3^{\prime}$ and $cy_4^{\prime}$ for the expressions
$$
\begin{array}{l}
x_1y_2-x_2y_1, \\
c_1y_3-x_3(\sigma_1y_1+\tau_1y_2)\ \text{and} \\
c_2y_4-x_4(\sigma_2\sigma_1y_1+\sigma_2\tau_1y_2+\tau_2y_3)
\end{array}
$$
respectively, the parameter $c$ disappears from the second equality member.

\begin{table}[t]
\centering
\begin{tabular}{|c|c|l|r|c|c|l|r|c|c|l|}
\cline{1-3} \cline{5-7} \cline{9-11}
\footnotesize $1$ & & \footnotesize $c_1 c_2 x_1^2 y_2^2$ & \tiny & \footnotesize $2$ & & \footnotesize $c_1 c_2 x_1^2 y_3^2$ & \tiny & \footnotesize $3$ & & \footnotesize $c_1 c_2 x_1^2 y_4^2$ \\
\cline{1-3} \cline{5-7} \cline{9-11}
\footnotesize $4$ & & \footnotesize $- 2 c_1 c_2 x_1 x_2 y_1 y_2$ & \tiny & \footnotesize $5$ & \footnotesize $\times$ & \footnotesize $- c_1 c_2 x_1 x_3 y_1 y_3$ & \tiny & \footnotesize $6$ & \footnotesize $\times$ & \footnotesize $- c_1 c_2 x_1 x_4 y_1 y_4$ \\
\cline{1-3} \cline{5-7} \cline{9-11}
\footnotesize $7$ & & \footnotesize $c_1 c_2 x_2^2 y_1^2$ & \tiny & \footnotesize $8$ & & \footnotesize $c_1 c_2 x_2^2 y_3^2$ & \tiny & \footnotesize $9$ & & \footnotesize $c_1 c_2 x_2^2 y_4^2$ \\
\cline{1-3} \cline{5-7} \cline{9-11}
\footnotesize $10$ & \footnotesize $\times$ & \footnotesize $- c_1 c_2 x_2 x_3 y_2 y_3$ & \tiny & \footnotesize $11$ & \footnotesize $\times$ & \footnotesize $- c_1 c_2 x_2 x_4 y_2 y_4$ & \tiny & \footnotesize $12$ & & \footnotesize $c_1 c_2 x_3^2 y_4^2$ \\
\cline{1-3} \cline{5-7} \cline{9-11}
\footnotesize $13$ & \footnotesize $\times$ & \footnotesize $- c_1 c_2 x_3 x_4 y_3 y_4$ & \tiny & \footnotesize $14$ & \footnotesize $\times$ & \footnotesize $- c_1 x_1^2 x_4 y_1 y_4 \sigma_1 \sigma_2$ & \tiny & \footnotesize $15$ & \footnotesize $\times$ & \footnotesize $- c_1 x_1 x_2 x_4 y_1 y_4 \sigma_2 \tau_1$ \\
\cline{1-3} \cline{5-7} \cline{9-11}
\footnotesize $16$ & \footnotesize $\times$ & \footnotesize $- c_1 x_1 x_2 x_4 y_2 y_4 \sigma_1 \sigma_2$ & \tiny & \footnotesize $17$ & \footnotesize $\times$ & \footnotesize $- c_1 x_1 x_3 x_4 y_3 y_4 \sigma_1 \sigma_2$ & \tiny & \footnotesize $18$ & \footnotesize $\times$ & \footnotesize $c_1 x_1 x_4^2 y_1^2 \sigma_1 \sigma_2$ \\
\cline{1-3} \cline{5-7} \cline{9-11}
\footnotesize $19$ & \footnotesize $\times$ & \footnotesize $c_1 x_1 x_4^2 y_2^2 \sigma_1 \sigma_2$ & \tiny & \footnotesize $20$ & \footnotesize $\times$ & \footnotesize $c_1 x_1 x_4^2 y_3^2 \sigma_1 \sigma_2$ & \tiny & \footnotesize $21$ & \footnotesize $\times$ & \footnotesize $- c_1 x_2^2 x_4 y_2 y_4 \sigma_2 \tau_1$ \\
\cline{1-3} \cline{5-7} \cline{9-11}
\footnotesize $22$ & \footnotesize $\times$ & \footnotesize $- c_1 x_2 x_3 x_4 y_3 y_4 \sigma_2 \tau_1$ & \tiny & \footnotesize $23$ & \footnotesize $\times$ & \footnotesize $c_1 x_2 x_4^2 y_1^2 \sigma_2 \tau_1$ & \tiny & \footnotesize $24$ & \footnotesize $\times$ & \footnotesize $c_1 x_2 x_4^2 y_2^2 \sigma_2 \tau_1$ \\
\cline{1-3} \cline{5-7} \cline{9-11}
\footnotesize $25$ & \footnotesize $\times$ & \footnotesize $c_1 x_2 x_4^2 y_3^2 \sigma_2 \tau_1$ & \tiny & \footnotesize $26$ & \footnotesize $\times$ & \footnotesize $- c_1 x_3^2 x_4 y_1 y_4 \sigma_1 \sigma_2$ & \tiny & \footnotesize $27$ & \footnotesize $\times$ & \footnotesize $- c_1 x_3^2 x_4 y_2 y_4 \sigma_2 \tau_1$ \\
\cline{1-3} \cline{5-7} \cline{9-11}
\footnotesize $28$ & \footnotesize $\times$ & \footnotesize $- c_1 x_3^2 x_4 y_3 y_4 \tau_2$ & \tiny & \footnotesize $29$ & \footnotesize $\times$ & \footnotesize $c_1 x_3 x_4^2 y_1 y_3 \sigma_1 \sigma_2$ & \tiny & \footnotesize $30$ & \footnotesize $\times$ & \footnotesize $c_1 x_3 x_4^2 y_2 y_3 \sigma_2 \tau_1$ \\
\cline{1-3} \cline{5-7} \cline{9-11}
\footnotesize $31$ & \footnotesize $\times$ & \footnotesize $c_1 x_3 x_4^2 y_3^2 \tau_2$ & \tiny & \footnotesize $32$ & \footnotesize $\times$ & \footnotesize $- c_2 x_1^2 x_3 y_1 y_3 \sigma_1$ & \tiny & \footnotesize $33$ & \footnotesize $\times$ & \footnotesize $- c_2 x_1 x_2 x_3 y_1 y_3 \tau_1$ \\
\cline{1-3} \cline{5-7} \cline{9-11}
\footnotesize $34$ & \footnotesize $\times$ & \footnotesize $- c_2 x_1 x_2 x_3 y_2 y_3 \sigma_1$ & \tiny & \footnotesize $35$ & \footnotesize $\times$ & \footnotesize $c_2 x_1 x_3^2 y_1^2 \sigma_1$ & \tiny & \footnotesize $36$ & \footnotesize $\times$ & \footnotesize $c_2 x_1 x_3^2 y_2^2 \sigma_1$ \\
\cline{1-3} \cline{5-7} \cline{9-11}
\footnotesize $37$ & \footnotesize $\times$ & \footnotesize $- c_2 x_2^2 x_3 y_2 y_3 \tau_1$ & \tiny & \footnotesize $38$ & \footnotesize $\times$ & \footnotesize $c_2 x_2 x_3^2 y_1^2 \tau_1$ & \tiny & \footnotesize $39$ & \footnotesize $\times$ & \footnotesize $c_2 x_2 x_3^2 y_2^2 \tau_1$\\
\cline{1-3} \cline{5-7} \cline{9-11}
\footnotesize $40$ & \footnotesize $\times$ & \footnotesize $- x_1^2 x_3 x_4 y_1 y_4 \sigma_1 \tau_2$ & \tiny & \footnotesize $41$ & \footnotesize $\times$ & \footnotesize $- x_1 x_2 x_3 x_4 y_1 y_4 \tau_1 \tau_2$ & \tiny & \footnotesize $42$ & \footnotesize $\times$ & \footnotesize $- x_1 x_2 x_3 x_4 y_2 y_4 \sigma_1 \tau_2$ \\
\cline{1-3} \cline{5-7} \cline{9-11}
\footnotesize $43$ & \footnotesize $\times$ & \footnotesize $x_1 x_3^2 x_4 y_1 y_4 \sigma_1^2 \sigma_2$ & \tiny & \footnotesize $44$ & \footnotesize $\times$ & \footnotesize $x_1 x_3^2 x_4 y_2 y_4 \sigma_1 \sigma_2 \tau_1$ & \tiny & \footnotesize $45$ & \footnotesize $\times$ & \footnotesize $x_1 x_3 x_4^2 y_1^2 \sigma_1 \tau_2$ \\
\cline{1-3} \cline{5-7} \cline{9-11}
\footnotesize $46$ & \footnotesize $\times$ & \footnotesize $- x_1 x_3 x_4^2 y_1 y_3 \sigma_1^2 \sigma_2$ & \tiny & \footnotesize $47$ & \footnotesize $\times$ & \footnotesize $x_1 x_3 x_4^2 y_2^2 \sigma_1 \tau_2$ & \tiny & \footnotesize $48$ & \footnotesize $\times$ & \footnotesize $- x_1 x_3 x_4^2 y_2 y_3 \sigma_1 \sigma_2 \tau_1$ \\
\cline{1-3} \cline{5-7} \cline{9-11}
\footnotesize $49$ & \footnotesize $\times$ & \footnotesize $- x_2^2 x_3 x_4 y_2 y_4 \tau_1 \tau_2$ & \tiny & \footnotesize $50$ & \footnotesize $\times$ & \footnotesize $x_2 x_3^2 x_4 y_1 y_4 \sigma_1 \sigma_2 \tau_1$ & \tiny & \footnotesize $51$ & \footnotesize $\times$ & \footnotesize $x_2 x_3^2 x_4 y_2 y_4 \sigma_2 \tau_1^2$ \\
\cline{1-3} \cline{5-7} \cline{9-11}
\footnotesize $52$ & \footnotesize $\times$ & \footnotesize $x_2 x_3 x_4^2 y_1^2 \tau_1 \tau_2$ & \tiny & \footnotesize $53$ & \footnotesize $\times$ & \footnotesize $- x_2 x_3 x_4^2 y_1 y_3 \sigma_1 \sigma_2 \tau_1$ & \tiny & \footnotesize $54$ & \footnotesize $\times$ & \footnotesize $x_2 x_3 x_4^2 y_2^2 \tau_1 \tau_2$ \\
\cline{1-3} \cline{5-7} \cline{9-11}
\footnotesize $55$ & \footnotesize $\times$ & \footnotesize $- x_2 x_3 x_4^2 y_2 y_3 \sigma_2 \tau_1^2$ & \tiny & & & & \tiny & & & \\
\cline{1-3} \cline{5-7} \cline{9-11}
\end{tabular}
\caption{Monomials of $\text{det}(V) c_1 c_2 cd$.}
\label{t:Monomios}
\end{table}

\begin{table}[t]
\centering
\begin{tabular}{|c|c|c|l|r|c|c|c|l|}
\cline{1-4} \cline{6-9}
\footnotesize $14$ & \footnotesize $\times$ & \footnotesize $15$ & \footnotesize $- c_1^2 x_1 x_4 y_1 y_4 \sigma_2$ & & \footnotesize $16$ & \footnotesize $\times$ & \footnotesize $21$ & \footnotesize $- c_1^2 x_2 x_4 y_2 y_4 \sigma_2$ \\
\cline{1-4} \cline{6-9}
\footnotesize $17$ & \footnotesize $\times$ & \footnotesize
 $22$ & \footnotesize $- c_1^2 x_3 x_4 y_3 y_4 \sigma_2$ & & \footnotesize $18$ & \footnotesize $\times$ & \footnotesize $23$ & \footnotesize $c_1^2 x_4^2 y_1^2 \sigma_2$ \\
\cline{1-4} \cline{6-9}
\footnotesize $19$ & \footnotesize $\times$ & \footnotesize $24$ & \footnotesize $c_1^2 x_4^2 y_2^2 \sigma_2$ & & \footnotesize $20$ & \footnotesize $\times$ & \footnotesize $25$ & \footnotesize $c_1^2 x_4^2 y_3^2 \sigma_2$ \\
\cline{1-4} \cline{6-9}
\footnotesize $32$ & \footnotesize $\times$ & \footnotesize $33$ & \footnotesize $- c_1 c_2 x_1 x_3 y_1 y_3$ & & \footnotesize $34$ & \footnotesize $\times$ & \footnotesize $37$ & \footnotesize $- c_1 c_2 x_2 x_3 y_2 y_3$ \\
\cline{1-4} \cline{6-9}
\footnotesize $35$ & & \footnotesize $38$ & \footnotesize $c_1 c_2 x_3^2 y_1^2$ & & \footnotesize $36$ & & \footnotesize $39$ & \footnotesize $c_1 c_2 x_3^2 y_2^2$ \\
\cline{1-4} \cline{6-9}
\footnotesize $40$ & \footnotesize $\times$ & \footnotesize $41$ & \footnotesize $- c_1 x_1 x_3 x_4 y_1 y_4 \tau_2$ & & \footnotesize $42$ & \footnotesize $\times$ & \footnotesize $49$ & \footnotesize $- c_1 x_2 x_3 x_4 y_2 y_4 \tau_2$ \\
\cline{1-4} \cline{6-9}
\footnotesize $43$ & \footnotesize $\times$ & \footnotesize $50$ & \footnotesize $c_1 x_3^2 x_4 y_1 y_4 \sigma_1 \sigma_2$ & & \footnotesize $44$ & \footnotesize $\times$ & \footnotesize $51$ & \footnotesize $c_1 x_3^2 x_4 y_2 y_4 \sigma_2 \tau_1$ \\
\cline{1-4} \cline{6-9}
\footnotesize $45$ & \footnotesize $\times$ & \footnotesize $52$ & \footnotesize $c_1 x_3 x_4^2 y_1^2 \tau_2$ & & \footnotesize $46$ & \footnotesize $\times$ & \footnotesize $53$ & \footnotesize $- c_1 x_3 x_4^2 y_1 y_3 \sigma_1 \sigma_2$ \\
\cline{1-4} \cline{6-9}
\footnotesize $47$ & \footnotesize $\times$ & \footnotesize $54$ & \footnotesize $c_1 x_3 x_4^2 y_2^2 \tau_2$ & & \footnotesize $48$ & \footnotesize $\times$ & \footnotesize $55$ & \footnotesize $- c_1 x_3 x_4^2 y_2 y_3 \sigma_2 \tau_1$ \\
\hline
\hline
\footnotesize $14$ & \footnotesize $\times$ & \footnotesize $40$ & \footnotesize $- c_1 c_2 x_1 x_4 y_1 y_4$ & & \footnotesize $16$ & \footnotesize $\times$ & \footnotesize $42$ & \footnotesize $- c_1 c_2 x_2 x_4 y_2 y_4$ \\
\cline{1-4} \cline{6-9}
\footnotesize $17$ & \footnotesize $\times$ & \footnotesize $28$ & \footnotesize $- c_1 c_2 x_3 x_4 y_3 y_4$ & & \footnotesize $18$ & & \footnotesize $45$ & \footnotesize $c_1 c_2 x_4^2 y_1^2$ \\
\cline{1-4} \cline{6-9}
\footnotesize $19$ & & \footnotesize $47$ & \footnotesize $c_1 c_2 x_4^2 y_2^2$ & & \footnotesize $20$ & & \footnotesize $31$ & \footnotesize $c_1 c_2 x_4^2 y_3^2$ \\
\cline{1-4} \cline{6-9}
\footnotesize $26$ & \footnotesize $\times$ & \footnotesize $43$ & \footnotesize $0$ & & \footnotesize $27$ & \footnotesize $\times$ & \footnotesize $44$ & \footnotesize $0$ \\
\cline{1-4} \cline{6-9}
\footnotesize $29$ & \footnotesize $\times$ & \footnotesize $46$ & \footnotesize $0$ & & \footnotesize $30$ & \footnotesize $\times$ & \footnotesize $48$ & \footnotesize $0$ \\
\hline
\hline
\footnotesize $5$ & & \footnotesize $32$ & \footnotesize $- 2c_1c_2x_1x_3y_1y_3$ & & \footnotesize $6$ & & \footnotesize $14$ & \footnotesize $- 2c_1c_2x_1x_4y_1y_4$ \\
\cline{1-4} \cline{6-9}
\footnotesize $10$ & & \footnotesize $34$ & \footnotesize $- 2c_1c_2x_2x_3y_2y_3$ & & \footnotesize $11$ & & \footnotesize $16$ & \footnotesize $- 2c_1c_2x_2x_4y_2y_4$ \\
\cline{1-4} \cline{6-9}
\footnotesize $13$ & & \footnotesize $17$ & \footnotesize $- 2c_1c_2x_3x_4y_3y_4$ & & & & & \\
\cline{1-4} \cline{6-9}
\end{tabular}
\caption{Monomials resulting from operations.}
\label{t:Substituciones}
\end{table}

The expression $\text{det}(V) c_1 c_2 cd$ is a homogeneous polynomial of total degree $6$ in the variables $c_1$, $c_2$, $x_1$, $x_2$, $x_3$, $x_4$, $y_1$, $y_2$, $y_3$ and $y_4$, in which only the parameters $\sigma_1$, $\tau_1$, $\sigma_2$ and $\tau_2$ appear. The monomials of the aforementioned polynomial are included in table~\ref{t:Monomios} and are identified by indexes placed in the first cells of the corresponding rows.

In order to eliminate the parameters $\sigma_1$, $\tau_1$, $\sigma_2$ and $\tau_2$, we group the monomials of the table~\ref{t:Monomios} in pairs to apply the following operations:
$$
\begin{array}{l}
\text{(1) Substitute}\ \ x_1\sigma_1+x_2\tau_1\ \ \text{by}\ \ c_1. \\
\text{(2) Substitute}\ \ c_1\sigma_2+x_3\tau_2\ \ \text{by}\ \ c_2. \\
\text{(3) Cancel opposite monomials}. \\
\text{(4) Add equal monomials}. \\
\end{array}
$$

Applied operations are detailed in table~\ref{t:Substituciones}, where the resulting monomials are identified by the indexes of the first monomials that are operated on. Each time an operation is applied, the monomials involved are marked with a $\times$ to the right of the index that identifies the monomial, so as not to use them again. The operations are done iteratively on monomials of tables~\ref{t:Monomios} and ~\ref{t:Substituciones} that are not marked, until no operation can be further applied.

All the resulting monomials have the factor $c_1c_2$. Therefore, by simplifying this factor the next equality is obtained:
$$
\begin{array}{ccl}
\vline height 0.4cm depth 0.3cm width 0pt \text{det}(V)cd & = & x_1^2 y_2^2 + x_1^2 y_3^2 + x_1^2 y_4^2 -2 x_1 x_2 y_1 y_2 - 2x_1x_3y_1y_3 - 2x_1x_4y_1y_4 \\
\vline height 0.4cm depth 0.3cm width 0pt & & x_2^2 y_1^2 + x_2^2 y_3^2 + x_2^2 y_4^2 - 2x_2x_3y_2y_3 - 2x_2x_4y_2y_4 + x_3^2 y_4^2 \\
\vline height 0.4cm depth 0.3cm width 0pt & & - 2x_3x_4y_3y_4 + x_4^2 y_1^2 + x_4^2 y_2^2 + x_4^2 y_3^2 + x_3^2 y_1^2 + x_3^2 y_2^2
\end{array}
$$

By polynomial checking, it is easy to verify the next equality:
$$
\text{det}(V)cd=(x_1^2+x_2^2+x_3^2+x_4^2)(y_1^2+y_2^2+y_3^2+y_4^2)-(x_1y_1+x_2y_2+x_3y_3+x_4y_4)^2
$$

By hypothesis, the second member of the previous equality is equal to $p^2$. Therefore, by applying remark~\ref{RemDetLattice}, we conclude that:
\begin{gather*}
\displaystyle
\text{det}(\Lambda^\bot)=\frac{p}{cd}\qedhere
\end{gather*}
\end{proof}

\begin{lem}\label{LemNormaW1}
Given a number $p\geq 1$, a $p-$orthonormal system $S=\{\,v_1,v_2\,\}$ and $w_1$ the first vector of the base $B^\bot$ of the orthogonal lattice $\Lambda^\bot$,
 then $\displaystyle N(w_1)=\frac{p(p-x_4^2-y_4^2)}{c_2^2\,d_1^2}$, where $c_2$ and $d_1$ are the parameters in table~\ref{t:SQNF}.
\end{lem}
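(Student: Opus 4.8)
The plan is to show that, up to the scalar factor $1/(c_2 d_1)$, the vector $w_1$ is simply the exterior product of the three-dimensional vectors $x=(x_1,x_2,x_3)$ and $y=(y_1,y_2,y_3)$ obtained by deleting the fourth coordinate of $v_1$ and $v_2$; this is consistent with the fact that the fourth coordinate of $w_1$ vanishes. Concretely, I would first record the identities $c_1 y_2'=x_1y_2-x_2y_1$ and $c_2 y_3'=c_1y_3-x_3(\sigma_1y_1+\tau_1y_2)$ that follow at once from the definitions of $y_2'$ and $y_3'$ in table~\ref{t:SQNF} (writing also $y_1'=\sigma_1y_1+\tau_1y_2$), and then verify by polynomial checking, using the gcd relations $x_1\sigma_1+x_2\tau_1=c_1$ and $c_1\sigma_2+x_3\tau_2=c_2$, that
$$
c_2\,d_1\,w_1=(x_2y_3-x_3y_2,\ x_3y_1-x_1y_3,\ x_1y_2-x_2y_1,\ 0).
$$

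Once this is established, set $u=(x_2y_3-x_3y_2,x_3y_1-x_1y_3,x_1y_2-x_2y_1)$, so that $N(w_1)=\|u\|^2/(c_2^2d_1^2)$. The three-dimensional Lagrange identity $\|u\|^2=N(x)N(y)-\langle x|y\rangle^2$ (a polynomial checking, or the restriction of the $4$-variable identity already used in Proposition~\ref{PropInvariantFactors1}) then reduces everything to the constraints defining a $p$-orthonormal system. Since $N(v_1)=N(v_2)=p$ and $\langle v_1|v_2\rangle=0$, we have $N(x)=p-x_4^2$, $N(y)=p-y_4^2$ and $\langle x|y\rangle=-x_4y_4$, whence
$$
\|u\|^2=(p-x_4^2)(p-y_4^2)-x_4^2y_4^2=p^2-p(x_4^2+y_4^2)=p(p-x_4^2-y_4^2),
$$
and therefore $N(w_1)=\dfrac{p(p-x_4^2-y_4^2)}{c_2^2\,d_1^2}$, as claimed.

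The main obstacle is the first step: collapsing the somewhat cumbersome expression for $w_1$ coming from Lemma~\ref{LemBaseOrtogonal} down to $(x_2y_3-x_3y_2,x_3y_1-x_1y_3,x_1y_2-x_2y_1,0)/(c_2d_1)$. This amounts to clearing the denominators $c_1,c_2,d_1$ coordinate by coordinate, expanding $y_1',y_2',y_3'$, and cancelling the resulting polynomials by repeated use of the two gcd relations — exactly the kind of bookkeeping already carried out, at a larger scale, in the proof of Proposition~\ref{PropDetLattice}. No new idea is needed beyond care; everything after that reduction is routine algebra combined with the hypotheses on $S$.
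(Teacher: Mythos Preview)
Your argument is correct, and it is in fact cleaner than the paper's own proof. The paper never isolates the identity
\[
c_2\,d_1\,w_1=(x_2y_3-x_3y_2,\ x_3y_1-x_1y_3,\ x_1y_2-x_2y_1,\ 0)
\]
at the level of the vector; instead it squares first, computing $N(w_1)\,c_1^2c_2^2d_1^2$ directly from the formula in Lemma~\ref{LemBaseOrtogonal}, then eliminates the B\'ezout parameters $\sigma_1,\tau_1$ monomial by monomial (this is the content of Table~\ref{t:MonomiosN(w)}), and only at the very end recognizes the resulting expression as $N(x)N(y)-\langle x\,|\,y\rangle^2$ via a separate polynomial checking. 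Your route---simplify $w_1$ first, then apply the three-dimensional Lagrange identity---replaces that bookkeeping by a short coordinate-wise verification and makes the geometric content (that $w_1$ is, up to scale, the cross product $x\times y$) explicit. One minor remark: the relation $c_1\sigma_2+x_3\tau_2=c_2$ is not actually needed here, since $\sigma_2,\tau_2$ do not occur in $w_1$; the single relation $x_1\sigma_1+x_2\tau_1=c_1$ together with the substitutions for $c_1y_2'$ and $c_2y_3'$ suffices, and the factor $c_1$ cancels of its own accord.
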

\begin{proof}
The proof is similar to that of proposition~\ref{PropDetLattice}. Considering the vector $w_1$ obtained in lemma~\ref{LemBaseOrtogonal} and calculating $N(w_1)$, the following equality is obtained:
$$
N(w_1)c_1^2c_2^2d_1^2=c_1^4y_2^{\prime\,2} + c_1^2x_3^2y_2^{\prime\,2}(\sigma_1^2 + \tau_1^2) + 2c_1c_2x_3y_2^{\prime}y_3^{\prime}(x_1\tau_1 - x_2\sigma_1) + c_2^2y_3^{\prime\,2}(x_1^2 + x_2^2)
$$

Substituting in the second member of equality $c_1y_2^{\prime}$ by $-x_2y_1+x_1y_2$ and $c_2y_3^{\prime}$ by $-x_3\sigma_1y_1-x_3\tau_1y_2+c_1y_3$, a homogeneous polynomial of total grade $6$ in the variables $c_1$, $x_1$, $x_2$, $x_3$, $y_1$, $y_2$ and $y_3$ is obtained, in which only the parameters $\sigma_1$ and $\tau_1$ appear.

The monomials of the aforementioned polynomial are listed in table~\ref{t:MonomiosN(w)}. The results of the following substitution are also included in the table:
replace $x_1\sigma_1+x_2\tau_1$ by $c_1$.

All the remaining monomials are multiplied by the factor $c_1^2$. Therefore, simplifying this factor, we obtain:
$$
\begin{array}{ccl}
\vline height 0.4cm depth 0.3cm width 0pt N(w_1)c_2^2d_1^2 & = & x_1^2y_2^2 + x_1^2y_3^2 - 2x_1x_2y_1y_2 + x_2^2y_1^2 + x_2^2y_3^2 \\
\vline height 0.4cm depth 0.3cm width 0pt & & - 2x_1x_3y_1y_3 - 2x_2x_3y_2y_3 + x_3^2y_1^2 + x_3^2y_2^2 \\
\end{array}
$$

\begin{table}[t]
\centering
\begin{tabular}{|l|r|l|l|r|l|l|}
\cline{1-1} \cline{3-4} \cline{6-7}
\footnotesize $c_1^2x_1^2y_2^2$ & & \footnotesize $-2c_1x_1^2x_3y_1y_3\sigma_1$ & \footnotesize & & \footnotesize $-2c_1x_1x_2x_3y_2y_3\sigma_1$ & \footnotesize \\
\cline{1-1} \cline{3-3} \cline{6-6}
\footnotesize $c_1^2x_1^2y_3^2$ & & \footnotesize $-2c_1x_1x_2x_3y_1y_3\tau_1$ & \footnotesize $-2c_1^2x_1x_3y_1y_3$ & & \footnotesize $-2c_1x_2^2x_3y_2y_3\tau_1$ & \footnotesize $-2c_1^2x_2x_3y_2y_3$ \\
\cline{1-1} \cline{3-4} \cline{6-7}
\footnotesize $- 2c_1^2x_1x_2y_1y_2$ & & \footnotesize $x_1^2x_3^2y_1^2\sigma_1^2$ & \footnotesize & & \footnotesize $x_1^2x_3^2y_2^2\sigma_1^2$ & \footnotesize \\
\cline{1-1} \cline{3-3} \cline{6-6}
\footnotesize $c_1^2x_2^2y_1^2$ & & \footnotesize $x_2^2x_3^2y_1^2\tau_1^2$ & \footnotesize $c_1^2x_3^2y_1^2$ & & \footnotesize $x_2^2x_3^2y_2^2\tau_1^2$ & \footnotesize $c_1^2x_3^2y_2^2$ \\
\cline{1-1} \cline{3-3} \cline{6-6}
\footnotesize $c_1^2x_2^2y_3^2$ & & \footnotesize $2x_1x_2x_3^2y_1^2\sigma_1\tau_1$ & \footnotesize & & \footnotesize $2x_1x_2x_3^2y_2^2\sigma_1\tau_1$ & \footnotesize \\
\cline{1-1} \cline{3-4} \cline{6-7}
\end{tabular}
\caption{Monomials of $N(w_1)c_1^2c_2^2d_1^2$.}
\label{t:MonomiosN(w)}
\end{table}

By polynomial checking, it is easy to verify the next equality:
$$
\begin{array}{ccl}
\vline height 0.4cm depth 0.3cm width 0pt N(w_1)c_2^2d_1^2 & = & (x_1^2+x_2^2+x_3^2+x_4^2)(y_1^2+y_2^2+y_3^2+y_4^2) \\
\vline height 0.4cm depth 0.3cm width 0pt & & - (x_1y_1+x_2y_2+x_3y_3+x_4y_4)^2 - x_4^2(y_1^2+y_2^2+y_3^2+y_4^2) \\
\vline height 0.4cm depth 0.3cm width 0pt & & -y_4^2(x_1^2+x_2^2+x_3^2+x_4^2) + 2x_4y_4(x_1y_1+x_2y_2+x_3y_3+x_4y_4) \\
\end{array}
$$

By hypothesis, the second member of the previous equality is equal to $p^2-px_4^2-py_4^2$. Therefore, we conclude that:
\begin{gather*}
\displaystyle
N(w_1)=\frac{p(p-x_4^2-y_4^2)}{c_2^2d_1^2}\qedhere
\end{gather*}

\end{proof}

\begin{lem}\label{LemSuppInvariantFactros1}
Given a prime number $p$ and a $p-$orthonormal system $S=\{\,v_1,v_2\,\}$ with $|\text{supp}(S)|>2$, associated to the lattice $\Lambda$, then $c=d=1$, where $c$ and $d$ are the parameters that appear in table~\ref{t:SQNF}.
\end{lem}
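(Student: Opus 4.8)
The plan is to read off $c=1$ from the gcd chain in table~\ref{t:SQNF} and then to observe that, once $c=1$, the Smith quasi-normal form of $V$ is in fact a genuine Smith normal form over $\Z$; its uniqueness, together with Proposition~\ref{PropInvariantFactors1}, then forces $d=1$ as well.

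First I would note that, according to table~\ref{t:SQNF}, $c_1=\gcd(x_1,x_2)$, $c_2=\gcd(c_1,x_3)$ and $c=\gcd(c_2,x_4)$, so by associativity of the greatest common divisor $c=\gcd(x_1,x_2,x_3,x_4)$. Since $|\text{supp}(S)|>2$ and $p$ is prime, Proposition~\ref{PropInvariantFactors1} applies and yields $\gcd(x_1,x_2,x_3,x_4)=1$; hence $c=1$.

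Next I would check that, once $c=1$, the data of table~\ref{t:SQNF} constitute an honest Smith normal form of $V$. There $R\in GL_4(\Z)$ already; the entries of $L$ are integers (in particular $-y_1'''$ is an integer combination of $y_1,\dots,y_4$), and its determinant $\det L=c$ is now equal to $1$, so $L\in GL_2(\Z)$ too. Hence $LVR$ has the diagonal form $(1,d)$ with $L\in GL_2(\Z)$, $R\in GL_4(\Z)$ and $1\mid d$, i.e.\ it is a Smith normal form of $V$; by uniqueness of the invariant factors, $\alpha_1=1$ and $\alpha_2=d$. Comparing with $\alpha_2=1$ from Proposition~\ref{PropInvariantFactors1}, we conclude $d=1$, and therefore $c=d=1$.

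If one prefers to finish on the lattice side: once $c=1$, Proposition~\ref{PropDetLattice} gives $\text{det}(\Lambda^\bot)=p/d$, and since $\text{det}(\Lambda^\bot)^2=\text{det}(G)$ is a positive integer---the determinant of the Gram matrix $G$ of the integral base $\{w_1,w_2\}$ furnished by Lemma~\ref{LemBaseOrtogonal}---we get $d^2\mid p^2$, so $d\in\{1,p\}$ because $p$ is prime; the case $d=p$ is then excluded by the same uniqueness observation, since the true Smith normal form of $V$ has diagonal $(1,1)$. I do not anticipate a real obstacle: the only point needing a little care is verifying that $L$ lands in $GL_2(\Z)$ once $c=1$, which is exactly what makes uniqueness of the Smith normal form applicable; apart from that the lemma is a short bookkeeping consequence of Proposition~\ref{PropInvariantFactors1} together with the structure of table~\ref{t:SQNF}, with no computation of substance.
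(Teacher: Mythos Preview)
Your argument is correct and is essentially the paper's own proof: read off $c=\gcd(x_1,x_2,x_3,x_4)=1$ from table~\ref{t:SQNF} via Proposition~\ref{PropInvariantFactors1}, observe that with $c=1$ the matrix $L$ lies in $GL_2(\Z)$ so the quasi-normal form is a genuine Smith normal form, and conclude $d=\alpha_2=1$ again by Proposition~\ref{PropInvariantFactors1}. Your added check that $\det L=c$ and the optional lattice-side remark are fine but not needed; the main line matches the paper exactly.
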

\begin{proof}
According to table~\ref{t:SQNF} it holds that $c=\text{gcd}(x_1,x_2,x_3,x_4)$ and, by proposition~\ref{PropInvariantFactors1}, we conclude that $c=1$. This result implies that the Smith quasi-normal form described in table~\ref{t:SQNF} is actually a normal form, because in this case $L\in GL_k(\Z)$, and consequently $d$ is the second invariant factor of $V$. Considering once more proposition~\ref{PropInvariantFactors1} we conclude that $d = 1$.
\end{proof}

\begin{prop}\label{PropPdivideG}
Given a prime number $p$, a $p-$orthonormal system $S=\{\,v_1,v_2\,\}$ with $|\text{supp}(S)|>2$ and the Gram matrix $G$ of the base $B^\bot=\{\,w_1,w_2\,\}$ of the orthogonal lattice $\Lambda^\bot$, then it holds that $p\,|\,G$.
\end{prop}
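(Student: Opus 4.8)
The plan is to argue modulo $p$, using only Proposition~\ref{PropInvariantFactors1}. Reduce everything to $\mathbb{F}_p^4$, keeping the standard bilinear form $\langle\cdot\,|\,\cdot\rangle$ (its Gram matrix is $I_4$, hence it is non-degenerate over $\mathbb{F}_p$ for every prime $p$), and write $\overline{u}$ for the reduction of $u\in\Z^4$. First I would show that the reductions $\overline{v_1},\overline{v_2}$ span a $2$-dimensional subspace $W\subseteq\mathbb{F}_p^4$. Indeed, since $p$ is prime and $|\text{supp}(S)|>2$, Proposition~\ref{PropInvariantFactors1} gives $\gcd(x_1,\dots,x_4)=\gcd(y_1,\dots,y_4)=1$, so $\overline{v_1}\neq 0\neq\overline{v_2}$, and it also gives $\gcd(m_{12},\dots,m_{34})=1$ for the $2\times2$ minors $m_{ij}=x_iy_j-x_jy_i$ of $V$ (this gcd is the product of the two invariant factors, both equal to $1$). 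If $\overline{v_1},\overline{v_2}$ were linearly dependent over $\mathbb{F}_p$ we could write $v_2-\lambda v_1=pu$ with $u\in\Z^4$, so $m_{ij}(v_1,v_2)=m_{ij}(v_1,pu)=p\,m_{ij}(v_1,u)$ for all $i<j$, contradicting $\gcd(m_{ij})=1$. I regard this coprimality of the minors as the load-bearing input: it is the only place where the primality of $p$ and the size of the support are really used.

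Next I would observe that $W$ is \emph{totally isotropic}. From $N(v_1)=N(v_2)=p\equiv 0$ and $\langle v_1|v_2\rangle=0$ we get $\langle\overline{v_i}|\overline{v_j}\rangle=0$ in $\mathbb{F}_p$ for all $i,j\in\{1,2\}$, hence $\langle\overline{v}|\overline{v}\rangle=0$ for every $\overline{v}=a\overline{v_1}+b\overline{v_2}\in W$. Since the form is non-degenerate, $\dim W^\bot=4-\dim W=2$, and the inclusion $W\subseteq W^\bot$ together with this dimension count forces $W=W^\bot$; that is, $W$ is a maximal totally isotropic subspace.

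Then comes the key step. By definition $\Lambda^\bot=\{\,w\in\Z^4 \mid \langle v_1|w\rangle=\langle v_2|w\rangle=0\,\}$, so reducing mod $p$ every $w\in\Lambda^\bot$ satisfies $\overline{w}\perp\overline{v_1},\overline{v_2}$, i.e. $\overline{w}\in W^\bot=W$. Hence $\overline{\Lambda^\bot}\subseteq W$ is itself totally isotropic, and therefore for any $w,w'\in\Lambda^\bot$ we have $\langle w|w'\rangle\equiv\langle\overline{w}|\overline{w'}\rangle=0\pmod p$. Applying this to the pairs drawn from $B^\bot=\{w_1,w_2\}$ shows that $N(w_1)$, $N(w_2)$ and $\langle w_1|w_2\rangle$ are all divisible by $p$, i.e. $p\mid G$, as claimed.

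An alternative route, closer to the computations already set up, would be to deduce $p\mid N(w_1)$ directly from Lemma~\ref{LemNormaW1} after checking $\gcd(c_2d_1,p)=1$, and then try to recover $p\mid N(w_2)$ and $p\mid\langle w_1|w_2\rangle$ from $\det G=p^2$ (Proposition~\ref{PropDetLattice} together with Lemma~\ref{LemSuppInvariantFactros1}, which gives $c=d=1$). The obstacle there is that $\det G=p^2$ does not by itself force $p$ to divide the remaining entries in the case $p^2\mid N(w_1)$, so one still needs either the isotropy argument above or a separate norm computation for $w_2$; for that reason I would present the mod-$p$ argument.
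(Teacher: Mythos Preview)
Your argument is correct and is a genuinely different, more conceptual route than the paper's. The paper proceeds computationally: it first invokes Lemma~\ref{LemNormaW1} to write $\mu=N(w_1)=p(p-x_4^2-y_4^2)/(c_2^2d_1^2)$, observes that $-p<p-x_4^2-y_4^2<p$ forces $p\nmid c_2^2d_1^2$ and hence $\mu=p\mu'$ with $|\mu'|<p$, and then chains this with $\det G=p^2$ (Proposition~\ref{PropDetLattice} plus Lemma~\ref{LemSuppInvariantFactros1}) to peel off $p\mid\lambda$ and finally $p\mid\nu$, the last step using the strict bound $|\mu'|<p$ to rule out $p\mid\mu'$. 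Your proof replaces all of this with a single observation over $\mathbb{F}_p$: the reductions $\overline{v_1},\overline{v_2}$ span a $2$-dimensional totally isotropic subspace $W$, which by the dimension count must equal its own orthogonal $W^\bot$, and every $w\in\Lambda^\bot$ reduces into $W^\bot=W$; total isotropy then gives $p\mid\langle w\,|\,w'\rangle$ for \emph{all} $w,w'\in\Lambda^\bot$, in particular for the specific basis $B^\bot$.

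What each buys: your route is basis-free, avoids the long polynomial computations behind Lemma~\ref{LemNormaW1} and Proposition~\ref{PropDetLattice}, and makes transparent why primality of $p$ and $|\text{supp}(S)|>2$ enter (exactly to guarantee, via Proposition~\ref{PropInvariantFactors1}, that $\dim W=2$). The paper's route, by contrast, extracts the additional quantitative fact $|\mu'|<p$ along the way; that bound is not needed for the present proposition, but the explicit value of $\det(\Lambda^\bot)$ from Proposition~\ref{PropDetLattice} is still required later in Theorem~\ref{ThmExisteV3}, so your simplification localises to this proposition rather than shortening the whole argument. Your closing paragraph also correctly diagnoses why the determinant identity alone cannot finish the alternative approach: the paper's fix is precisely the bound $|\mu'|<p$ coming from the explicit norm formula.
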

\begin{proof}
Suppose that the Gram matrix
$
G=\left(
\begin{array}{cc}
\mu & \lambda \\
\lambda & \nu
\end{array}
\right)
$.

Let us consider the value of $\mu=N(w_1)$ obtained in lemma~\ref{LemNormaW1}. The prime factorization of $p(p-x_4^2-y_4^2)$ contains only one factor $p$, because $p$ is prime and $-p<p-x_4^2-y_4^2<p$ (remember that we are assuming that $x_4\neq 0$ or $y_4\neq 0$). Then, the prime factorization of $c_2^2\,d_1^2$ does not contain $p$, because the number of times it contains each prime factor is even. Consequently $c_2^2\,d_1^2\,|\,(p-x_4^2-y_4^2)$ and this implies that $p\,|\,\mu$, i.e, $\mu=p\,\mu^\prime$. Moreover, $|\mu^\prime|<p$.

Applying proposition~\ref{PropDetLattice}, lemma~\ref{LemSuppInvariantFactros1} and the property $\text{det}^2(\Lambda^\bot)=\text{det}(G)$, we get $p^2=p\,\mu^\prime\,\nu-\lambda^2$. This implies $p\,|\,\lambda^2$ and, keeping in mind that $p$ is a prime, we have that $p\,|\,\lambda$, i.e. $\lambda=p\,\lambda^\prime$.

Reconsidering the previous equality, and canceling a factor $p$, we obtain $p=\mu^\prime\,\nu-p\,\lambda^{\prime\,2}$. This implies again that $p\,|\,\mu^\prime\,\nu$ and, considering that $p$ is prime and $|\mu^\prime|<p$, we get $p\,|\,\nu$, i.e. $\nu=p\,\nu^\prime$.

We arrive to the final conclusion that
$
G=p\left(
\begin{array}{cc}
\mu^\prime & \lambda^\prime \\
\lambda^\prime & \nu^\prime
\end{array}
\right)
$,
i.e. $p\,|\,G$.
\end{proof}

\begin{thm}\label{ThmExisteV3}
Given a prime number $p$, a $p-$orthonormal system $S=\{\,v_1,v_2\,\}$ with $|\text{supp}(S)|>2$ and associated lattices $\Lambda$ and $\Lambda^\bot$, there exists $v_3\in\Lambda^\bot$ such that it verifies $N(v_3)=p$.
\end{thm}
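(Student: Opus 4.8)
The plan is to combine the two structural facts already available — that $\det(\Lambda^\bot)=p$ and that $p$ divides the Gram matrix of $B^\bot$ — with Minkowski's convex body theorem to produce the required vector.

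First I would record that, since $|\text{supp}(S)|>2$ and $p$ is prime, Lemma~\ref{LemSuppInvariantFactros1} gives $c=d=1$, whence Proposition~\ref{PropDetLattice} yields $\det(\Lambda^\bot)=p$. Next, by Proposition~\ref{PropPdivideG} one may write $G=pG'$, where $G$ is the Gram matrix of $B^\bot=\{w_1,w_2\}$ and $G'$ is an integral, symmetric, positive definite $2\times2$ matrix. Consequently, for every $v=b_1w_1+b_2w_2\in\Lambda^\bot$ one has $N(v)=(b_1,b_2)\,G\,(b_1,b_2)^t=p\,(b_1,b_2)\,G'\,(b_1,b_2)^t$, i.e. $p$ times a non-negative integer, and that integer is strictly positive whenever $v\neq 0$ because $G'$ is positive definite. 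Hence every nonzero vector of $\Lambda^\bot$ has norm a positive multiple of $p$, in particular norm at least $p$.

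Then I would apply Theorem~\ref{ThmMinkowski} inside the $2$-dimensional subspace of $\R^4$ spanned by $B^\bot$, taking $K$ to be the closed disc centred at the origin of radius $r$ with $\tfrac{4p}{\pi}<r^2<2p$ (such an $r$ exists since $\tfrac{4}{\pi}<2$). Its area is $\pi r^2>4p=2^2\det(\Lambda^\bot)$, so Minkowski's theorem provides a nonzero $v_3\in\Lambda^\bot$ with $N(v_3)\leq r^2<2p$. Combining this with the previous step, $N(v_3)$ is a positive multiple of $p$ that is strictly below $2p$, so $N(v_3)=p$, which is exactly the claim.

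I do not expect a genuine obstacle in this final argument: essentially all the work has already gone into establishing $\det(\Lambda^\bot)=p$ (Proposition~\ref{PropDetLattice} together with Lemma~\ref{LemSuppInvariantFactros1}) and $p\mid G$ (Proposition~\ref{PropPdivideG}), which in turn rest on the delicate polynomial identities for $\det(V)$ and for $N(w_1)$. The only point needing a little care is the distinction between strict and non-strict inequalities in the Minkowski step, which is why one chooses $r^2$ strictly between $4p/\pi$ and $2p$. As an alternative to Minkowski, one can instead invoke the classification of positive definite integral binary quadratic forms of determinant $1$: it shows that $G'$ is $GL_2(\Z)$-equivalent to the identity matrix, and the corresponding change of basis turns $B^\bot$ into a full $p$-orthonormal base of $\Lambda^\bot$, yielding $v_3$ (and in fact a complete base) directly.
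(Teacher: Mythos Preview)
Your argument is correct and follows essentially the same route as the paper: both combine $\det(\Lambda^\bot)=p$ and $p\mid G$ with Minkowski's theorem to force a nonzero lattice vector whose norm is a positive multiple of $p$ strictly below $2p$, hence equal to $p$. The only cosmetic difference is that the paper applies Minkowski in the coordinate plane $\Z^2$ to the ellipse $\{x^tG'x\le 1\}$ (area exactly $\pi$, so an $\epsilon$-scaling plus a finiteness argument is used), whereas you take a Euclidean disc in the span of $B^\bot$ with $r^2\in(4p/\pi,\,2p)$, which avoids that limiting step.
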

\begin{proof}
Let $G$ be the Gram matrix of the base $B^\bot$ of the associated lattice $\Lambda^\bot$.

Proposition~\ref{PropDetLattice}, lemma~\ref{LemSuppInvariantFactros1} and property $\text{det}^2(\Lambda^\bot)=\text{det}(G)$ allow us to conclude that $\text{det}(G)=p^2$.
Applying now proposition~\ref{PropPdivideG} we obtain that $\displaystyle G^{\prime}=\frac{G}{p}$ is an unimodular matrix, i.e. $G^\prime\in GL_2(\Z)$, and that, given a vector $v_3\in\Lambda^\bot$, $N(v_3)=b^t\,G\,b=p$ if and only if $b^t\,G^\prime\,b=1$, $b$ being the coordinate vector of $v_3$ in the base $B^\bot$.

Let $K = \{\,x\in\R^2\ |\ x^t\,G^{\prime}\,x \leq 1\,\}$ and $\{u_1,u_2\}$ be an orthonormal base of eigenvectors of $G^\prime$ with eigenvalues $\lambda_1$ and $\lambda_1$ respectively. Note that $\lambda_1$ and $\lambda_2$ are real, since $G^\prime$ is symmetric, positive, because $G^\prime$ is definite positive, and verify $\lambda_1\,\lambda_2=\text{det}(G^\prime)=1$. Then $K$ is the ellipse $\lambda_1x^2+\lambda_2y^2\leq 1$, with respect to the reference system determined by $u_1$ and $u_2$, and has volume $\pi\frac{1}{\sqrt {\lambda_1}}\frac{1}{\sqrt {\lambda_2}} = \pi$.

Given a $0<\epsilon<1$, let be $E_\epsilon$ the ellipse $K$ scaled by a factor $f_\epsilon=\frac{2}{\sqrt {\pi}}+\epsilon$. The ellipse $E_\epsilon$ has volume $\pi f_\epsilon^2>\pi\frac{2^2}{\pi}=2^2$. Then, for the Theorem~\ref{ThmMinkowski}, there exists a point $b$ in the lattice $\Z^2$ (with volume of the fundamental domain $1$) such that $b\neq 0$ and $b\in E_\epsilon$. Since the set of points of $\Z^2$ that belong to any of the ellipses $E_\epsilon$ is finite, it is shown that there is a point $b$ in the lattice $\Z^2$ such that $b\neq 0$ and $b\in K$.

The point $b$ defines a vector $v_3\in\Lambda^\bot$ that verifies $0<b^t\,G^\prime\,b\leq 1$. Then, it holds $b^t\,G^\prime\,b=1$, since $b^t\,G^\prime\,b$ is integer, and, at last, is the wanted vector of $\Lambda^\bot$, because $N(v_3)=b^t\,G\,b=p$.
\end{proof}

\subsection{Extensions of $p-$orthonormal systems}

\medskip
Putting together remark~\ref{RemThmUnVector}, remark~\ref{RemThmSuppDos}, proposition~\ref{PropThmTresVectores} and theorem~\ref{ThmExisteV3}, we obtain the following theorem.

\begin{thm}\label{ThmBaseExtensible}
Given a prime number $p$ and a $p-$orthonormal system in $\Z^4$, $S$, then $S$ can be extended to a $p-$orthonormal base.
\end{thm}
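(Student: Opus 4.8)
The plan is to prove the statement by induction on $k^\bot=4-k$, where $k$ is the number of vectors of the given $p-$orthonormal system $S$, feeding $S$ through the case analysis already developed in this section. The base case is $k=4$, i.e. $k^\bot=0$: then $S$ is itself a $p-$orthonormal base and there is nothing to prove.

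For the inductive step I would take a $p-$orthonormal system $S=\{\,v_1,\dots,v_k\,\}$ with $1\leq k\leq 3$, let $\Lambda$ be its associated lattice, and produce a single new vector $v_{k+1}\in\Lambda^\bot$ with $N(v_{k+1})=p$, invoking the appropriate earlier result according to $k$: Remark~\ref{RemThmUnVector} when $k=1$; Remark~\ref{RemThmSuppDos} or Theorem~\ref{ThmExisteV3} when $k=2$, according to whether $|\text{supp}(S)|=2$ or $|\text{supp}(S)|>2$; and Proposition~\ref{PropThmTresVectores} when $k=3$. A small point worth recording is that these cases are exhaustive: for $k=2$ one has $|\text{supp}(S)|\geq|\text{supp}(v_1)|\geq 2$, since $p$ is prime and hence no nonzero vector of norm $p$ can be supported on a single coordinate (otherwise $x_i^2=p$), so the support is either exactly $2$ or strictly larger.

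Next I would check that $S'=S\cup\{v_{k+1}\}$ is again a $p-$orthonormal system, now with $k+1$ vectors: the new vector is orthogonal to each $v_i$, $1\leq i\leq k$, because it lies in $\Lambda^\bot$; the old vectors are pairwise orthogonal by hypothesis; and all $k+1$ vectors have norm $p$, the old ones by hypothesis and $v_{k+1}$ by construction. The inductive hypothesis applied to $S'$, whose orthogonal complement has dimension $4-(k+1)<k^\bot$, then extends $S'$ — and therefore $S$ — to a $p-$orthonormal base, completing the induction.

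I do not expect any genuine obstacle at this stage: the real content has already been spent in Proposition~\ref{PropThmTresVectores} and, above all, in Theorem~\ref{ThmExisteV3} (the delicate $k=2$, large-support case, which passes through the determinant computation of Proposition~\ref{PropDetLattice}, the norm formula of Lemma~\ref{LemNormaW1}, the invariant-factor analysis of Lemma~\ref{LemSuppInvariantFactros1}, and the Minkowski argument of Proposition~\ref{PropPdivideG}). The only things to be careful about when assembling the final theorem are that the case split is complete and that each step genuinely preserves $p-$orthonormality, so that the reduction can be iterated until $k=4$.
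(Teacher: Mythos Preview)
Your proposal is correct and is essentially the paper's own argument: the paper proves this theorem in one line by ``putting together'' Remark~\ref{RemThmUnVector}, Remark~\ref{RemThmSuppDos}, Proposition~\ref{PropThmTresVectores} and Theorem~\ref{ThmExisteV3}, which is exactly your case split, and your explicit induction on $k^\bot$ just formalises the iteration that the paper leaves implicit. Your extra remark that $|\text{supp}(S)|\geq 2$ for $k=2$ because $p$ is prime is a nice touch that the paper does not spell out; the only slip is cosmetic---the Minkowski argument lives in Theorem~\ref{ThmExisteV3}, not in Proposition~\ref{PropPdivideG}.
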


\section{Generalizations and conjectures}

We have proved that every $p-$orthonormal system of vectors in $\Z^4$ can be extended to a $p-$orthonormal base if $p$ is a prime number. Besides, we have verified the result for every $1\leq p\leq 10000$. In this section, all verifications for given values of $p$ and $n$ have been made by exhaustive checking of all $p-$orthonormal systems in $\Z^n$. From the previous results we conjecture that the following result holds.

\begin{con}
\label{ConjeturaZ4}
Given an integer number $p\geq 1$ and a $p-$orthonormal system in $\Z^4$, $S$, then $S$ can be extended to a $p-$orthonormal base.
\end{con}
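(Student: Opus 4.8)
The plan is to keep the four-case structure already set up in Section~3 and to isolate precisely where primality was used, replacing those steps by arguments valid for arbitrary $p\ge 1$. The cases $k=1$ (remark~\ref{RemThmUnVector}), $k=2$ with $|\text{supp}(S)|=2$ (remark~\ref{RemThmSuppDos}) and $k=3$ (proposition~\ref{PropThmTresVectores}) are already established for every $p\ge 1$, since none of them uses the primality of $p$. Hence the conjecture reduces entirely to the single remaining case: $k=2$ with $|\text{supp}(S)|>2$ and $p$ composite. In that case the whole problem becomes the assertion that the binary orthogonal lattice $\Lambda^\bot$, with the basis $\{w_1,w_2\}$ of lemma~\ref{LemBaseOrtogonal} and Gram matrix $G$, represents $p$; that is, that there exists $b\in\Z^2$ with $b^t G b=p$.

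First I would record what survives unchanged. Proposition~\ref{PropDetLattice} and lemma~\ref{LemNormaW1} hold for all $p\ge 1$, so $\det G=(p/cd)^2$ is a perfect square (in particular $cd\mid p$) and $N(w_1)=p(p-x_4^2-y_4^2)/(c_2^2d_1^2)$. What breaks is the chain proposition~\ref{PropInvariantFactors1} $\to$ lemma~\ref{LemSuppInvariantFactros1} $\to$ proposition~\ref{PropPdivideG}: for composite $p$ one can no longer force $c=d=1$, nor $p\mid G$, so $G$ need not be $p$ times a unimodular matrix and the clean Minkowski step of theorem~\ref{ThmExisteV3} does not apply. Indeed a binary form of determinant $(p/cd)^2$ need not represent $p$ at all (for instance $\mathrm{diag}(1,p^2)$ does not), so the truth of the conjecture must rest on $G$ lying in a restricted class of forms, and the task is to prove that the geometry forces it there.

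The approach I would take exploits the conformal (quaternionic) structure of $\Z^4$. Identifying $\Z^4$ with the Lipschitz quaternions, a norm-$p$ quaternion $q$ induces the similarity $x\mapsto qx$ of ratio $\sqrt p$, which sends the orthonormal frame $\{1,i,j,k\}$ to the $p$-orthonormal base $\{q,qi,qj,qk\}$; this already shows $p$-orthonormal bases exist for every $p\ge 1$ and suggests studying $\Lambda^\bot$ through a dual partner. Concretely, let $\bar\Lambda=\Z^4\cap\mathrm{span}_\Q(v_1,v_2)$ be the saturation of $\Lambda$. Both $\bar\Lambda$ and $\Lambda^\bot$ are primitive rank-$2$ sublattices of the unimodular lattice $\Z^4$, they are orthogonal complements of one another, and $\bar\Lambda$ already represents $p$ because $v_1\in\bar\Lambda$ with $N(v_1)=p$. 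The standard gluing relation for a primitive sublattice of a unimodular lattice identifies the discriminant forms of $\bar\Lambda$ and $\Lambda^\bot$ up to sign, which pins $\Lambda^\bot$ to the genus dual to that of $\bar\Lambda$ and, in particular, should yield that $p$ is represented by $\Lambda^\bot$ over every completion $\Z_\ell$.

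The main obstacle is then the passage from local to global representation. Local representability guarantees that $p$ is represented by some form in the genus of $\Lambda^\bot$, but not necessarily by $\Lambda^\bot$ itself unless that genus contains a single class; this is exactly the gap that cannot be closed by primality-free divisibility alone, and it is the reason the statement is posed as a conjecture. To close it I would try to show that the relevant genus has class number one, or to use the extra rigidity available here—the ambient lattice is unimodular, $\det\Lambda^\bot=p/cd$ is explicitly known, and the exact value of $N(w_1)$ constrains the reduced form of $G$—perhaps combined with an induction that peels off square factors of $p$ (writing $p=m^2p'$ and descending) so as to reduce to squarefree $p$, where the genus is better behaved. I expect this class-number and local-to-global step to be the crux of the entire problem.
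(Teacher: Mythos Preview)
The statement you are addressing is Conjecture~\ref{ConjeturaZ4}: the paper does \emph{not} prove it. The authors establish Theorem~\ref{ThmBaseExtensible} only for prime $p$ and then pose the extension to arbitrary $p\ge 1$ as an open conjecture, supported solely by exhaustive verification for $1\le p\le 10000$. There is therefore no proof in the paper against which to compare your proposal.

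Your reduction is accurate: remarks~\ref{RemThmUnVector} and~\ref{RemThmSuppDos} and proposition~\ref{PropThmTresVectores} are indeed stated and proved for all $p\ge 1$, and you correctly locate the single missing case ($k=2$, $|\text{supp}(S)|>2$, $p$ composite) and the exact point where the paper's argument breaks (proposition~\ref{PropInvariantFactors1} $\to$ lemma~\ref{LemSuppInvariantFactros1} $\to$ proposition~\ref{PropPdivideG}). Your observation that a binary form of determinant $(p/cd)^2$ need not represent $p$ in general, so that the Minkowski step of theorem~\ref{ThmExisteV3} cannot be salvaged without further structural input, is also correct.

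However, what you have written is not a proof but a program, and you say so yourself: the local-to-global passage for binary forms is a genuine obstruction, and you explicitly flag the class-number step as unresolved. Since the paper offers no argument here either, the honest summary is that you have correctly diagnosed why the conjecture is open and sketched a plausible line of attack (saturation $\bar\Lambda$, discriminant-form duality inside the unimodular $\Z^4$, reduction to squarefree $p$), but the crux---showing the relevant genus has one class, or otherwise forcing global representability---remains a gap in your proposal exactly as it remains a gap in the literature.
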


The most natural generalization of the problem is to consider it in any dimension $n\geq 1$, i.e. to study the problem in $\Z^n$.

\begin{prob}
\label{prob1}
Given an integer number $p\geq 1$ and a $p-$orthonormal system in $\Z^n$, $S$, ¿can $S$ be extended to a $p-$orthonormal base?
\end{prob}

This problem arose from the study of discrete quantum states \cite{GL1}, for quantum computing. Because the dimension of the vector space of these states ($m-$qubits) is $2^m$, it would be expected that the result would be fulfilled for these dimensions.

An analogous construction to that given in remark \ref{RemThmUnVector} shows the result for $n=2$. Note that if $p$ cannot be written as a sum of two squares \cite{JJ} (the prime decomposition of $p$ contains a prime congruent to $3$ mod $4$ raised to an odd power), there are no $p-$orthonormal systems in $\Z^2$. The case of dimension $4$ has already been studied and, in the case $n=8$, we have checked the result for $1\leq p\leq 36$.

To analyze the problem in other dimensions we try to find counterexamples that help us to understand in which cases the problem has a positive answer. If $p$ is not a square and there exists a $p-$orthonormal base in $\Z^n$ then there are counterexamples for $p$ in dimension $n+1$. Indeed, let $\{v_1\dots,v_n\}$ be a $p-$orthonormal base in dimension $n$. Then $\{w_1\dots,w_n\}$ is a $p-$orthonormal system in dimension $n+1$ that cannot be extended to a $p-$orthonormal base, being:
$$
w_j=(v_{j,1},\dots,v_{j,n},0)\qquad 1\leq j\leq n
$$

This construction allows us to find counterexamples for any dimension $n\neq 0\,\text{mod}\,4$, $n\neq 1$ and $n\neq 2$. Given an integer $p\geq 1$, we consider the $p-$orthonormal base in $\Z^4$ $S_1=\{v_1,v_2,v_3,v_4\}$ and the matrix $A$,
$$\begin{array}{l}
v_1=(x_1,x_2,x_3,x_4) \\
v_2=(-x_2,x_1,-x_4,x_3) \\
v_3=(-x_3,x_4,x_1,-x_2) \\
v_4=(x_4,x_3,-x_2,-x_1)
\end{array}
\quad\text{and}\quad
A=\left(
\begin{array}{rrrr}
x_1 & x_2 & x_3 & x_4 \\
-x_2 & x_1 & -x_4 & x_3 \\
-x_3 & x_4 & x_1 & -x_2 \\
x_4 & x_3 & -x_2 & -x_1
\end{array}
\right),
$$
where $p=x_1^2+x_2^2+x_3^2+x_4^2$. If $p$ can be written as a sum of two squares, $p=y_1^2+y_2^2$, we define the $p-$orthonormal base in $\Z^2$ $S_2=\{u_1,u_2\}$ and the matrix $B$,
$$\begin{array}{l}
u_1=(y_1,y_2) \\
v_2=(-y_2,y_1)
\end{array}
\qquad\text{and}\qquad
B=\left(
\begin{array}{rrrr}
y_1 & y_2 \\
-y_2 & y_1
\end{array}
\right).
$$
Then, the rows of the matrices $C_1$, $C_2$ y $C_3$ define non-extensible $p-$orthonormal systems.
\begin{enumerate}
\item[(i)] $C_1$ if $p$ is not a square, $n=1\,\text{mod}\,4$ and $n\neq 1$.
\item[(ii)] $C_2$ if $p$ cannot be written as a sum of two squares, $n=2\,\text{mod}\,4$ and $n\neq 2$.
\item[(iii)] $C_3$ if $p$ is not a square and can be written as a sum of two squares and $n=3\,\text{mod}\,4$.
\end{enumerate}
$$
C_1=\left(
\begin{array}{ccccc}
A & \cdots & 0 & 0 \\
\vdots & \ddots & \vdots & \vdots \\
0 & \cdots & A & 0 \\
\end{array}\right)\
C_2=\left(
\begin{array}{cccccc}
A & \cdots & 0 & 0 & 0 \\
\vdots & \ddots & \vdots & \vdots & \vdots \\
0 & \cdots & A & 0 & 0 \\
\end{array}\right)\
C_3=\left(
\begin{array}{cccccc}
A & \cdots & 0 & 0 & 0 \\
\vdots & \ddots & \vdots & \vdots & \vdots \\
0 & \cdots & A & 0 & 0 \\
0 & \cdots & 0 & B & 0 \\
\end{array}\right)
$$

The experimental verifications and the previous counterexamples make us think that the generalization of conjecture \ref{ConjeturaZ4} should be the following.

\begin{con}
\label{ConjeturaZn}
Given numbers $n=0\,\text{mod}\,4$ ($n\geq 1$) and $p\geq 1$ and a $p-$orthonormal system in $\Z^n$, $S$, then $S$ can be extended to a $p-$orthonormal base.
\end{con}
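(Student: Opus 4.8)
The plan is to prove the statement by induction on the number $k$ of vectors in the system, reducing everything to a single \emph{one-step extension} claim: every $p$-orthonormal system $S=\{v_1,\dots,v_k\}$ in $\Z^n$ with $k<n$ admits a vector $v_{k+1}\in\Lambda^\bot$ with $N(v_{k+1})=p$. Granting this, the enlarged system $S\cup\{v_{k+1}\}$ is again $p$-orthonormal, since orthogonality to the previous vectors is built into $\Lambda^\bot$, and iterating $n-k$ times yields a $p$-orthonormal base. The case $k=1$ is handled in every even dimension by the construction of Remark~\ref{RemThmUnVector}, so the entire difficulty sits in the inductive step. The hypothesis $n\equiv0\pmod4$ must enter this step decisively, since the explicit families $C_1,C_2,C_3$ of Section~4 exhibit non-extensible systems whenever $n\not\equiv0\pmod4$ (with $n\neq1,2$); a correct argument is therefore obliged to exploit a structural feature of $\Z^n$ present exactly in the residue class $0$ modulo $4$.

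The second step recasts the one-step claim as a representation problem for the orthogonal lattice, as in the proof of Theorem~\ref{ThmExisteV3}. First I would replace $\Lambda$ by its primitive closure $\bar\Lambda=(\Q\Lambda)\cap\Z^n$, which leaves $\Lambda^\bot=\{v\in\Z^n:\langle v_i|v\rangle=0\}$ unchanged, so $\Lambda$ may be assumed primitive. For a primitive sublattice of the unimodular lattice $\Z^n$ one has $\det(\Lambda^\bot)=\det(\bar\Lambda)$, and since the Gram matrix of $\{v_1,\dots,v_k\}$ is $pI_k$, the discriminant group and discriminant form of $\bar\Lambda$ are those of $\langle p\rangle^{\oplus k}$. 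By Nikulin's gluing theory for primitive embeddings, the discriminant form of $\Lambda^\bot$ is then the negative of that of $\bar\Lambda$; in particular $\Lambda^\bot$ is a positive definite lattice of rank $r=n-k$ and determinant $p^{\,k/2}$ whose genus is pinned down by $p$ and $k$ alone. The one-step claim becomes: \emph{the Gram form $G$ of $\Lambda^\bot$ represents the value $p$.} This is the exact analogue, in arbitrary rank, of the statement ``$G$ represents $p$'' that Proposition~\ref{PropPdivideG} and Theorem~\ref{ThmExisteV3} established for $n=4$ via the factorization $G=pG'$ with $G'$ unimodular.

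The third step produces the representation, split according to the rank $r=n-k$. When $r$ is large (say $r\geq5$, i.e. $k\leq n-5$) I would argue by the representation theory of positive definite forms: away from $p$ the form $G$ is unimodular and hence represents the unit $p$ locally, while at $p$ the local condition is explicit from the Jordan splitting dictated by the discriminant form and encodes the norm-$p$ hypothesis on the $v_i$; Tartakowsky's theorem then represents $p$ once $p$ is large enough, and the finitely many small $p$ are covered by the exhaustive checks already reported. For $r=1$ (the case $k=n-1$) I would generalize the exterior product of Proposition~\ref{PropThmTresVectores}: the cross product $t$ of the $n-1$ orthogonal norm-$p$ vectors satisfies $N(t)=\det(pI_{n-1})=p^{\,n-1}$, and $v_n=t/p^{\,n/2-1}$ then has norm $p^{\,n-1}/p^{\,n-2}=p$, where $n\equiv0\pmod4$ is precisely what makes the exponent $n/2-1$ an integer; the work here is to prove the divisibility $p^{\,n/2-1}\mid t$ that keeps $v_n$ in $\Z^n$, the analogue of $p\mid t$ in Proposition~\ref{PropThmTresVectores}. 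The intermediate ranks $r\in\{2,3,4\}$ are the delicate zone: one would like to scale $G$ to a unimodular form and invoke Minkowski's Theorem~\ref{ThmMinkowski} as in Theorem~\ref{ThmExisteV3}, but that manoeuvre relied on the $p$-modular identity $G=pG'$, which need not persist once $r\neq k$.

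The main obstacle is exactly this definite, small-rank, small-$p$ representation step, where neither Tartakowsky's asymptotics nor the class-number-one miracle behind the $n=4$ argument is available, and where Minkowski's theorem only produces a vector of $G$-norm at most $p$ rather than exactly $p$. My proposed device is the quaternionic structure peculiar to $n\equiv0\pmod4$: writing $\Z^n\cong\mathbb{H}^{\,n/4}$ with the sum-of-squares norm as the quaternion norm, and using the multiplications that generated the matrix $A$ of Section~4, I would attempt a downward induction that removes one quaternionic coordinate at a time, reducing to the settled case $n=4$ of Theorem~\ref{ThmBaseExtensible}. Over $\R$ or $\Q$ such a reduction is immediate, but the crux—and the genuine source of difficulty—is to carry it out while preserving integrality and the exact norm $p$, rather than merely the rational quadratic space; it is here that the hypothesis $n\equiv0\pmod4$ must do its real work, and here that I expect the argument either to close or to reveal which additional number-theoretic input the conjecture truly requires.
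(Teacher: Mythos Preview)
The statement you are trying to prove is presented in the paper as an open \emph{conjecture}; the authors give no proof, only the experimental evidence ($n=8$, $1\le p\le 36$) and the counterexamples for $n\not\equiv0\pmod4$ that motivate it. There is therefore nothing in the paper to compare your argument against, and your proposal should be read as an attack on an open problem rather than as a reconstruction of a known proof.

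As such an attack, the outline is reasonable but has real gaps, several of which you flag yourself and one or two you do not. First, the Tartakowsky step is not set up correctly: the theorem says that a \emph{fixed} positive definite form in $\ge5$ variables represents every sufficiently large locally represented integer, with the threshold depending on the form. Here the form $G$ on $\Lambda^\bot$ and the target value $p$ move together, so ``large $p$'' does not place you in the asymptotic regime for any single form; you would need a uniform or genus-level statement, and the paper's finite checks (only $n=8$, $p\le36$) cannot absorb the residual cases for all $n\equiv0\pmod4$. Second, the passage to the primitive closure $\bar\Lambda$ quietly changes the Gram matrix: $\Lambda$ has Gram matrix $pI_k$, but $\bar\Lambda$ generally does not, so its discriminant form need not be that of $\langle p\rangle^{\oplus k}$, and your identification of the genus of $\Lambda^\bot$ from ``$p$ and $k$ alone'' is not yet justified. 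Third, in the $r=1$ step your parity remark is off: $n/2-1$ is an integer for every even $n$, so that alone does not isolate $n\equiv0\pmod4$; the condition must instead enter through the unproven divisibility $p^{\,n/2-1}\mid t$, which you correctly identify as the real work but do not supply. Finally, the quaternionic downward induction is, as you say, only a hope at this point: the integrality-preserving reduction from $\mathbb{H}^{n/4}$ to $\mathbb{H}^{n/4-1}$ is exactly the heart of the matter, and nothing in the proposal indicates why it should succeed.

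In short, you have a sensible architecture and an honest accounting of the hard spots, but no step that actually uses $n\equiv0\pmod4$ in a way that is both correct and complete; the conjecture remains open on your outline as it does in the paper.
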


But, what happens if $p$ is a square? We have verified the result for $n=3,5$ and $1^2\leq p\leq 100^2$, $n=6$ and $1^2\leq p\leq 33^2$, $n=7$ and $1^2\leq p\leq 13^2$ and $n=9$ and $1^2\leq p\leq 2^2$. Nevertheless, we have found that the problem \ref{prob1} has a negative answer if $n=9$, $p=9$ and $S=\{(1,\dots,1)\}$. This counterexample can be generalized as follows: if $n={\bar n}^2$ and $p=n{\bar p}^2$ are odd integers, then the set $S=\{v_1=(\bar p,\dots,\bar p)\}$ cannot be extended to a $p-$orthonormal base in $\Z^n$. Indeed, $S$ cannot be extended with a vector $v_2$ because, on one hand, the number of odd components of $v_2$ must be odd because $N(v_2)=p$ is odd and, on the other hand, the number of odd components of $v_2$ must be even because $\langle v_1|v_2\rangle=0$ is even. Hence, if $p$ is a square, our conjecture is as follows.

\begin{con}
Given numbers $n\geq 1$ and $p\geq 1$, so that either $n$ is even or $p$ is even or $n\nmid p$, and a $p^2-$orthonormal system in $\Z^n$, $S$, then $S$ can be extended to a $p-$orthonormal base.
\end{con}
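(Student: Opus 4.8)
The plan is to follow the architecture of Section~3, reducing the extension problem for a $p^2$-orthonormal system $S=\{v_1,\dots,v_k\}\subset\Z^n$ to a geometry-of-numbers statement about its orthogonal lattice $\Lambda^\bot$, and to induct on $k$: it suffices to show that, whenever $S$ is not already a base, there exists $v_{k+1}\in\Lambda^\bot$ with $N(v_{k+1})=p^2$, the arithmetic hypothesis ``$n$ even, or $p$ even, or $n\nmid p$'' being a condition on $n$ and $p$ alone and hence automatically inherited by the enlarged system. The degenerate configurations of small support are handled by the explicit formulas of Remarks~\ref{RemThmUnVector}--\ref{RemThmSuppDos} and their obvious higher-dimensional analogues (and, in the last step $k=n-1$, one may try the $(n-1)$-fold exterior product in the spirit of Proposition~\ref{PropThmTresVectores}); so the substantive case is that of a ``spread-out'', primitive system, which as in Proposition~\ref{PropInvariantFactors1} one must track through the induction to keep the Smith-form computation under control.

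For such a system I would first generalise Lemma~\ref{LemBaseOrtogonal}, Proposition~\ref{PropDetLattice} and Proposition~\ref{PropPdivideG} to arbitrary $n$ and $k$. Compute a $\Z$-basis $B^\bot$ of $\Lambda^\bot$ from the Smith (quasi-)normal form of the coordinate matrix $V$ as in Lemma~\ref{LemaBaseLatticeOrtogonal}. Then show, by a determinant computation extending Proposition~\ref{PropDetLattice} --- or, more conceptually, via the discriminant form of the orthogonal complement of $\Lambda$ inside the unimodular lattice $\Z^n$ (after passing to the primitive closure $(\Lambda\otimes\Q)\cap\Z^n$, which does not change $\Lambda^\bot$) --- that the Gram matrix $G$ of $B^\bot$ has determinant a power of $p$, and that under the arithmetic hypothesis on $n,p$ it is divisible by a suitable power $p^{s}$, so that $G'=p^{-s}G$ is an integral, positive definite quadratic form of rank $m=n-k$, \emph{unimodular} in the favourable cases. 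Finding $v_{k+1}\in\Lambda^\bot$ with $N(v_{k+1})=p^2$ then becomes: does $G'$ represent a prescribed power of $p$ (equal to $1$ in the cases at hand)? The arithmetic hypothesis is exactly what is needed at the base step --- it excludes the odd/odd obstruction described after Conjecture~\ref{ConjeturaZn} (parity of the number of odd coordinates of a would-be new vector) --- and the same parity bookkeeping must be carried along at each stage.

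The main obstacle is the concluding geometry-of-numbers step. When $G'$ is forced to be unimodular of rank $m\le 4$, the argument of Theorem~\ref{ThmExisteV3} goes through: the body $\{x:x^{t}G'x\le1\}$ has the volume of the Euclidean $m$-ball, and dilating it past $2^{m}$ and applying Minkowski's Theorem~\ref{ThmMinkowski}, together with the finiteness of lattice points in the family of dilates, forces a nonzero $b\in\Z^{m}$ with $b^{t}G'b=1$. For $5\le m\le 7$ the available dilation is too small for Minkowski alone (the limiting value only gives $b^{t}G'b\in\{1,2\}$), but one can instead invoke the classification of positive definite unimodular lattices: every one of rank $\le 7$ is isometric to $\Z^{m}$ and so represents $1$. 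For $m\ge 8$, however, a unimodular form need not represent $1$ ($E_8$, and its relatives $D_{12}^{+}$, $E_8^{2},\dots$ in higher rank), so the proof would have to establish that the \emph{specific} lattices occurring as rescaled orthogonal complements of $p^{2}$-orthonormal systems never lie in this exceptional class --- equivalently, that such $\Lambda^\bot$ always contains a vector of norm $p^{2}$. I do not see how to do this in general; this is precisely the content of the conjecture, currently supported only by the exhaustive checks reported above. A realistic intermediate target is therefore Conjecture~\ref{ConjeturaZn} for $n=8$ (so $m\le 7$, closed by the classification), and then $n=12,16,\dots$, where one would try to rule out $E_8$-, $D_{12}^{+}$- and $E_8^{2}$-type summands in $\Lambda^\bot$ by exploiting the rigidity imposed by its discriminant form.
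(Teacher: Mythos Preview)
The statement you are trying to prove is not a theorem in the paper but a \emph{conjecture}: the paper offers no proof of it, only the motivation (the parity obstruction for $n$ odd, $p$ odd, $n\mid p$) and the small-range computer verifications listed just before it. There is therefore nothing to compare your argument against.

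As for the proposal itself, you have correctly identified that the machinery of Section~3 does not close the problem, and you say so explicitly: once $m=n-k\ge 8$, even if you succeed in showing that the rescaled Gram form $G'$ is integral and unimodular, unimodularity alone no longer forces the representation of $1$ (the $E_8$ obstruction you mention). Your outline is thus an honest research plan rather than a proof, and the genuine gap is exactly the one you name --- ruling out the exceptional even unimodular summands in $\Lambda^\bot$. Two further cautions: (i) the reduction to a \emph{unimodular} $G'$ already requires control of the invariant factors of $V$ well beyond Proposition~\ref{PropInvariantFactors1}, since $p^2$ is not prime and the gcd arguments there break down; you would need a substitute showing that the elementary divisors of the coordinate matrix of a primitive $p^2$-orthonormal system divide suitable powers of $p$; and (ii) the ``degenerate small-support'' cases are not as innocuous in higher $n$ as Remarks~\ref{RemThmUnVector}--\ref{RemThmSuppDos} suggest --- the inductive use of lower-dimensional instances of the very conjecture you are proving must be made explicit and is itself open. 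In short, your write-up is a reasonable roadmap, but it is not a proof, and the paper does not claim one either.
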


\subsection{Structural properties of the problem}

Given the integer number $k$ and the vectors $u=(x_1,\dots,x_n)$ and $v=(y_1,\dots,y_n)$ belonging to $\Z^n$, we denote the \emph{parity of $k$} by $P(k)=k\,\text{mod}\,2$, the \emph{parity of $u$} by $P(u)=(x_1+\dots+x_n)\,\text{mod}\,2$ and the \emph{parity of $u$ and $v$} by $P(u,v)=\langle u|v\rangle\,\text{mod}\,2$. Note that $P(u)=P(N(u))$.

These definitions allow us to consider the conditions of $p-$orthonormality in terms of parities (module $2$), proving the following result.

\begin{prop}
Given a $p-$orthonormal system in $\Z^n$, $S=\{v_1,\dots,v_k\}$, then it holds that $P(p)=P(v_j)$, $1\leq j\leq k$, and $P(v_h,v_j)=0$, $1\leq h,j\leq k$.
\end{prop}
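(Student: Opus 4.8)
The plan is to read the statement off directly from the definitions, since it is nothing more than the $p$-orthonormality conditions expressed modulo $2$. Recall that $S=\{v_1,\dots,v_k\}$ being a $p$-orthonormal system means precisely that $N(v_j)=p$ for all $1\leq j\leq k$ and $\langle v_h|v_j\rangle=0$ for all $1\leq h<j\leq k$. So the whole proof is a matter of substituting these equalities into the parity functions and invoking one elementary congruence.

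The first step is to justify the remark $P(u)=P(N(u))$ for an arbitrary $u=(x_1,\dots,x_n)\in\Z^n$: this follows from $x^2\equiv x\pmod 2$ for every integer $x$, so that $N(u)=x_1^2+\cdots+x_n^2\equiv x_1+\cdots+x_n\pmod 2$, which is exactly $P(N(u))=P(u)$. Granting this, the first claim is immediate: for each $1\leq j\leq k$ one has $P(v_j)=P(N(v_j))=P(p)$, using the identity together with $N(v_j)=p$.

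For the second claim I would take $1\leq h<j\leq k$ and use the orthogonality $\langle v_h|v_j\rangle=0$, which gives $P(v_h,v_j)=\langle v_h|v_j\rangle\bmod 2=0$; by symmetry of the inner product the same holds when $h>j$. (On the diagonal $h=j$ one instead gets $P(v_j,v_j)=N(v_j)\bmod 2=P(p)$, so the vanishing is to be understood for the distinct pairs.) There is no real obstacle here: the only point that needs a moment of care is the congruence $x^2\equiv x\pmod 2$ underlying $P(u)=P(N(u))$, and the bookkeeping of which $p$-orthonormality hypothesis feeds which parity identity; everything else is a direct substitution.
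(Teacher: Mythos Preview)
Your proof is correct and matches the paper's approach exactly: the paper gives no explicit proof at all, merely remarking that the proposition follows by ``consider[ing] the conditions of $p$-orthonormality in terms of parities (module $2$),'' which is precisely the direct substitution you carry out. Your parenthetical observation about the diagonal case $h=j$ is in fact a valid correction to the statement as printed---one has $P(v_j,v_j)=N(v_j)\bmod 2=P(p)$, which does not vanish for odd $p$---and the paper does not address this.
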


\subsection{Orthogonal extensions}

Given a set of vectors belonging to $\Z^n$, $S=\{v_1,\dots,v_k\}$, such that $\langle v_i|v_j\rangle=0$ for all $1\leq i<j\leq k$, we will say that $S$ is an \emph{orthogonal system} and, if $k=n$, that $S$ is an \emph{orthogonal base}.

The relaxation of the condition from $p-$orthonormality to orthogonality allows to extend any orthogonal system. Indeed, lemma \ref{LemaBaseLatticeOrtogonal} does not depend on the normalization of the vectors and can be applied in $\Z^n$, proving the following proposition.

\begin{prop}
\label{propOrtogonal}
Given an orthogonal system in $\Z^n$, $S$, then $S$ can be extended to an orthogonal base.
\end{prop}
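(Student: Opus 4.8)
The plan is to mimic the argument of Lemma~\ref{LemaBaseLatticeOrtogonal}, but applied directly in $\Z^n$ and without ever invoking the norm condition $N(v_i)=p$. First I would observe that an orthogonal system $S=\{v_1,\dots,v_k\}$ in $\Z^n$ consists of linearly independent vectors: the Gram matrix is diagonal with strictly positive diagonal entries $N(v_i)$, hence invertible over $\R$. So $S$ generates a lattice $\Lambda$ of dimension $k$, and it makes sense to speak of $\Lambda^\bot=\{v\in\Z^n\mid \langle v_i|v\rangle=0,\ 1\le i\le k\}$.

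Next I would form the coordinate matrix $V\in\Z^{k\times n}$ whose rows are $v_1,\dots,v_k$, and compute its Smith normal form $L\,V\,R=N$ with $L\in GL_k(\Z)$, $R\in GL_n(\Z)$ and $N$ the $k\times n$ matrix carrying the invariant factors $\alpha_1\mid\cdots\mid\alpha_k$ on its diagonal (all $\alpha_i>0$ since $V$ has full row rank $k$). Exactly as in the proof of Lemma~\ref{LemaBaseLatticeOrtogonal}, the substitution $Y=R^{-1}X$ turns $VX=0$ into $NY=0$, which is equivalent to $y_1=\cdots=y_k=0$; hence $B^\bot=\{Re_{k+1},\dots,Re_n\}$ is a $\Z$-basis of $\Lambda^\bot$. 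This step is purely about the Smith normal form over $\Z$ and is insensitive to the normalization of the $v_i$, so nothing from the $p$-orthonormal setting is needed.

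Then I would assemble the extension. Put $w_j=Re_{k+j}$ for $1\le j\le n-k$, so that $\{w_1,\dots,w_{n-k}\}$ spans $\Lambda^\bot$ and, by construction, $\langle v_i|w_j\rangle=0$ for all $i,j$. It remains to argue that $S\cup\{w_1,\dots,w_{n-k}\}$ is itself an orthogonal system — but here we should be careful: the $w_j$ need not be mutually orthogonal. The fix is to apply the Gram--Schmidt process over $\Q$ to $w_1,\dots,w_{n-k}$, producing $\Q$-orthogonal vectors $w_j'$ in the span of $\Lambda^\bot$, and then clear denominators to land back in $\Z^n$; since clearing denominators only scales each vector and the $w_j$ are already orthogonal to every $v_i$, the resulting integer vectors $\tilde w_1,\dots,\tilde w_{n-k}$ together with $v_1,\dots,v_k$ form an orthogonal system of $n$ vectors, hence (being $n$ linearly independent vectors in $\Z^n$) an orthogonal base in the sense of the definition preceding the proposition.

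The only delicate point — and the step I expect to be the main obstacle — is this last orthogonalization: Lemma~\ref{LemaBaseLatticeOrtogonal} hands us a \emph{lattice basis} of $\Lambda^\bot$, not an orthogonal one, so the clean sentence ``lemma~\ref{LemaBaseLatticeOrtogonal} can be applied in $\Z^n$'' really needs to be supplemented by the rational Gram--Schmidt-plus-denominator-clearing argument to guarantee mutual orthogonality among the new vectors. Once that is in place, there is no arithmetic obstruction at all, since we are never asking the new vectors to have a prescribed norm.
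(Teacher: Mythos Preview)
Your argument is correct and follows the paper's approach: the paper's entire proof is the one sentence preceding the proposition, namely that Lemma~\ref{LemaBaseLatticeOrtogonal} is insensitive to normalization and works verbatim in $\Z^n$. You have correctly spotted that this sentence, taken literally, only produces a lattice basis of $\Lambda^\bot$ rather than an orthogonal one, and your rational Gram--Schmidt followed by clearing denominators is a valid way to close that gap.

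A marginally cleaner alternative---and probably what the authors have in mind---is induction on $n-k$: Lemma~\ref{LemaBaseLatticeOrtogonal} guarantees $\Lambda^\bot$ has rank $n-k\ge 1$, so pick any single nonzero $w\in\Lambda^\bot$, adjoin it to $S$ to obtain an orthogonal system of size $k+1$, and repeat. This avoids Gram--Schmidt entirely and makes it transparent that no norm control is ever needed. Either route is fine; yours has the minor advantage of invoking the Smith normal form only once.
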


Given an orthogonal set in $\Z^n$, $S=\{v_1,\dots,v_k\}$ ($1\leq k\leq n$), we denote the \emph{norm of $S$} by $N(S)=\text{max}\{N(v_j)\ |\ 1\leq j\leq k\}$. So, an interesting problem, in view of proposition \ref{propOrtogonal}, is the following:

\begin{prob}
\label{prob2}
Given an orthogonal system in $\Z^n$, $S$, determine the orthogonal base with the smaller norm that extends $S$.
\end{prob}

\end{document}